\theoremstyle{plain}
\newtheorem{thm}{Theorem}[section]
\newtheorem{cor}[thm]{Corollary}
\newtheorem{lem}[thm]{Lemma}
\newtheorem{prop}[thm]{Proposition}
\newtheorem{conj}[thm]{Conjecture}
\theoremstyle{definition}
\theoremstyle{remark}
\newtheorem{rem}[thm]{Remark}
\numberwithin{equation}{section}
\begin{document}
\title{On the spectral asymptotics for the buckling problem}%
\author{Davide Buoso}%
\author{Paolo Luzzini}
\author{Luigi Provenzano}%
\author{Joachim Stubbe}%

\address{Davide Buoso, Dipartimento di Scienze e Innovazione Tecnologica (DiSIT), Università degli Studi del Piemonte Orientale ``A. Avogadro'', Viale Teresa Michel 11, 15121 Alessandria (ITALY). E-mail: {\tt davide.buoso@uniupo.it}}%
\address{Paolo Luzzini, EPFL, SB MATH SCI-SB-JS, Station 8, CH-1015 Lausanne, Switzerland. E-mail: {\tt paolo.luzzini@epfl.ch}}%
\address{Luigi Provenzano, Dipartimento di Scienze di Base e Applicate per l'Ingegneria, Sapienza Universit\`a di Roma, Via Antonio Scarpa 16, 00161 Roma, Italy. E-mail: {\tt luigi.provenzano@uniroma1.it} }
\address{Joachim Stubbe, EPFL, SB MATH SCI-SB-JS, Station 8, CH-1015 Lausanne, Switzerland. E-mail: {\tt joachim.stubbe@epfl.ch}}%


\begin{abstract}
We provide a direct proof of Weyl's law for the buckling eigenvalues of the biharmonic operator on domains of $\mathbb R^d$ of finite measure. The proof relies on asymptotically sharp lower and upper bounds that we develop for the Riesz mean $R_2(z)$. Lower bounds are obtained by making use of the so-called ``averaged variational principle''. Upper bounds are obtained in the spirit of Berezin-Li-Yau. Moreover, we state a conjecture for the second term in Weyl's law and prove its correctness in two special cases: balls in $\mathbb R^d$ and bounded intervals in $\mathbb R$.

\bigskip
\noindent
{\it Key words:} Biharmonic operator, Buckling problem, eigenvalue asymptotics, Riesz means.

\bigskip
\noindent
{\bf 2020 Mathematics Subject Classification:} 35P20, 35P15, 47A75, 35J30, 34L15.

\end{abstract}
\maketitle
\setcounter{page}{1}


\section{Introduction and statement of the main results}

Let $\Omega$ be a domain (i.e., an open connected set) in $\mathbb R^d$ of finite measure. We consider the buckling eigenvalue problem, namely
\begin{equation}
\label{buckling}
\left\{\begin{array}{ll}
\Delta^2 v=-\sigma\Delta v, & \text{in\ }\Omega,\\
v=\partial_\nu v=0, & \text{on\ }\partial\Omega.
\end{array}\right.
\end{equation}
Here $\partial\Omega$ denotes the boundary of $\Omega$ and $\partial_{\nu}v$ denotes the outer normal derivative $v$. Problem \eqref{buckling} is understood in the weak sense as follows: find a function $v\in H^2_0(\Omega)$ and a number $\sigma\in\mathbb R$ such that
$$
\int_{\Omega}\Delta v\Delta\phi dx=\sigma\int_{\Omega}\nabla v\cdot\nabla \phi dx\,\ \ \ \forall\phi\in H^2_0(\Omega).
$$
It is standard to prove that problem \eqref{buckling} admits a non-decreasing sequence of positive eigenvalues of finite multiplicity
$$
0<\sigma_1\le\sigma_2\le\dots\le\sigma_j\le\dots\nearrow+\infty,
$$
with associated eigenfunctions denoted by $v_j$. In particular, the eigenvalues are variationally characterized as
\begin{equation}
\label{buckling_minimax}
\sigma_j=\min_{\substack{V\subset H^2_0 \\ {\rm dim\ }V=j}}\max_{v\in V\setminus \{0\}}\frac{\int_{\Omega} (\Delta v)^2dx}{\int_{\Omega} |\nabla v|^2dx}.
\end{equation}

In this article we are interested in the asymptotic behavior of the eigenvalues $\sigma_j$ as $j\rightarrow+\infty$, or, equivalently, to the asymptotic behavior as $z\rightarrow+\infty$ of the eigenvalue counting function $N(z)$, namely
$$
N(z):=\#\left\{j\in\mathbb N:\sigma_j<z\right\}.
$$
Throughout the paper we shall denote by $\mathbb N$ the set of positive integer numbers. The asymptotic behavior of $N(z)$ as $z\rightarrow+\infty$ when $\Omega$ is a bounded Lipschitz domain is described in \cite{kozlov}, while an alternative approach for domains of class $C^2$ is proposed in \cite{liu}. We also mention \cite{ash_krein} where the authors link the buckling problem \eqref{buckling} to a Krein extension of the Laplace operator, providing a number of properties and results that follows from this relation. Moreover, we refer the interested reader to \cite{rozenbljum} for a more general approach for nonsmooth operators. In the present paper we shall present a simplified approach which requires minimal assumptions on $\Omega$, namely, that $\Omega$ has finite measure. We have been recently informed that an altrenative and simple approach has been proposed in \cite{friedlander_21}, making use of a completely different argument.

Let $a_{+}$ denote the positive part of a real number $a$, and for $p,z>0$ let
\begin{equation*}
  R_p(z):=\sum_{j}(z-\sigma_j)_{+}^p=p\int_{0}^{+\infty}(z-t)_{+}^{p-1}N(t)\,dt
\end{equation*}
denote the Riesz mean of order $p$. Here we write $\sum_j$ for $\sum_{j=1}^{\infty}$. We state now our main result.
\begin{thm}\label{weyl_buckling}
Let $\Omega$ be a domain in $\mathbb R^d$ of finite measure. Then
 \begin{equation}\label{Buckling-ev-Weyl-R-2}
    \lim_{z\rightarrow+\infty}R_2(z)z^{-2-\frac{d}{2}}=\frac{8}{(d+2)(d+4)}\,(2\pi)^{-d}B_d|\Omega|.
 \end{equation}
This limit is equivalent to Weyl's law for the counting function:
 \begin{equation}\label{Buckling-ev-Weyl-R-0}
    \lim_{z\rightarrow+\infty}N(z)z^{-\frac{d}{2}}=(2\pi)^{-d}B_d|\Omega|.
 \end{equation}
\end{thm}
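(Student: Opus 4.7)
The plan is to derive the Riesz mean asymptotic \eqref{Buckling-ev-Weyl-R-2} by sandwiching $R_2(z)$ between two asymptotically matching bounds, and then to obtain \eqref{Buckling-ev-Weyl-R-0} by a Tauberian transfer. The equivalence of the two limits rests on the identity $R_2(z)=2\int_0^z(z-t)N(t)\,dt$ together with the monotonicity of $N$.

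For the upper bound I would work in the spirit of Berezin-Li-Yau. Normalising the eigenfunctions so that $\int_\Omega|\nabla v_j|^2=1$, Parseval on $-\Delta v_j$ gives $\sigma_j=\int_{\mathbb R^d}|\xi|^2\cdot(|\xi|^2|\hat v_j(\xi)|^2)\,d\xi$, so $|\xi|^2|\hat v_j(\xi)|^2\,d\xi$ is a probability measure on $\mathbb R^d$. Since $\{\nabla v_j\}_j$ is orthonormal in $L^2(\Omega;\mathbb R^d)$, testing Bessel's inequality against $\tfrac{\xi}{|\xi|}e^{-i\xi\cdot x}\mathbf 1_\Omega$ and integrating by parts yields the Berezin-type bound $\sum_j|\xi|^2|\hat v_j(\xi)|^2\le(2\pi)^{-d}|\Omega|$. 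Applying Jensen's inequality to the convex map $t\mapsto(z-t)_+^2$ with the above density, and summing in $j$, gives
\begin{equation*}
R_2(z)\le (2\pi)^{-d}|\Omega|\int_{|\xi|^2<z}(z-|\xi|^2)^2\,d\xi=\tfrac{8}{(d+2)(d+4)}(2\pi)^{-d}B_d|\Omega|\,z^{2+d/2}.
\end{equation*}

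For the lower bound I would invoke the averaged variational principle applied to \eqref{buckling_minimax}, using trial functions of the form $\phi_\xi(x)=\chi(x)e^{i\xi\cdot x}$ indexed by $\xi\in\mathbb R^d$, with $\chi\in H^2_0(\Omega)$ a smooth cutoff whose support exhausts $|\Omega|$ in measure. The averaging procedure produces an inequality of the form $R_2(z)\ge\int_{\mathbb R^d}F(\xi,z;\chi)\,d\xi$, and since the Rayleigh quotient of $\phi_\xi$ converges to $|\xi|^2$ as $\chi$ approaches $\mathbf 1_\Omega$, one recovers the same leading constant as the upper bound. Condition \eqref{M} is invoked precisely at this step to ensure that suitable cutoffs exist.

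To conclude, Karamata's Tauberian theorem converts the asymptotic $R_2(z)\sim C\,z^{2+d/2}$ with $C=\tfrac{8}{(d+2)(d+4)}(2\pi)^{-d}B_d|\Omega|$ into $N(z)\sim\frac{C}{2\,B(2,d/2+1)}z^{d/2}=(2\pi)^{-d}B_d|\Omega|\,z^{d/2}$, which is \eqref{Buckling-ev-Weyl-R-0}; the matching of the constants follows from $2B(2,d/2+1)=\tfrac{8}{(d+2)(d+4)}$. I expect the main obstacle to lie in the lower bound: producing trial functions in $H^2_0(\Omega)$ whose cumulative contribution is sharp enough to recover the leading constant requires cutoffs whose first and second derivatives are carefully controlled near $\partial\Omega$, and this is precisely where a nontrivial geometric hypothesis such as \eqref{M} on $\Omega$ is needed.
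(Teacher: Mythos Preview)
Your upper bound is correct and elegant: Jensen applied to the convex map $(z-\cdot)_+^2$ against the probability measure $|\xi|^2|\hat v_j(\xi)|^2\,d\xi$, combined with the pointwise Bessel bound $\sum_j|\xi|^2|\hat v_j(\xi)|^2\le(2\pi)^{-d}|\Omega|$, yields $R_2(z)\le\frac{8}{(d+2)(d+4)}(2\pi)^{-d}B_d|\Omega|\,z^{2+d/2}$ in one stroke. The paper reaches the same bound by first proving the sharp $R_1$ inequality (via the Legendre transform of \eqref{Buckling-ev-sum-sharp-lower-bound}) and then integrating, so your route is a mild shortcut of the same Berezin--Li--Yau mechanism.

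The lower bound, however, has a genuine gap. In the buckling problem the averaged variational principle is set up in the Hilbert space with inner product $\langle u,v\rangle=\int_\Omega\nabla u\cdot\nabla v$; with trial functions $f_\xi=(2\pi)^{-d/2}\chi e^{i\xi\cdot x}$ one integrates by parts to get $\int_{\mathbb R^d}|\langle v_j,f_\xi\rangle|^2\,d\xi=\|\chi\,\Delta v_j\|_2^2\le\|\chi\|_\infty^2\,\sigma_j$. Hence the left side of \eqref{RieszVersion} is controlled by $\|\chi\|_\infty^2\sum_j(z-\sigma_j)_+\sigma_j=\|\chi\|_\infty^2\bigl(zR_1(z)-R_2(z)\bigr)$, \emph{not} by $R_2(z)$. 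So the AVP does not deliver an inequality of the form $R_2(z)\ge\int F(\xi,z;\chi)\,d\xi$ as you claim; it delivers a lower bound on $zR_1(z)-R_2(z)$, which by itself is an \emph{upper} bound on $R_2$. The paper supplies the missing step: one rewrites $zR_1(z)-R_2(z)=z^3\frac{d}{dz}\bigl(R_2(z)/(2z^2)\bigr)$, bounds the right-hand side below by the correct power of $z$, and integrates the resulting differential inequality over $[z_\epsilon,z]$ to extract $\liminf_{z\to\infty}R_2(z)z^{-2-d/2}\ge\frac{8}{(d+2)(d+4)}(2\pi)^{-d}B_d|\Omega|$. Without this device (or an equivalent), your argument does not close.

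It is also worth recording that the paper's primary trial family is not the Fourier functions $\chi e^{i\xi\cdot x}$ but the Dirichlet Bilaplacian eigenfunctions $U_j$: the Parseval identity $\sum_i|\int\nabla v_j\cdot\nabla U_i|^2=\|\Delta v_j\|_2^2=\sigma_j$ again produces a bound on $zR_1-R_2$, and condition \eqref{M} enters through Weyl's law for the $\Lambda_j$ (Theorem~\ref{weyl_dir_bil}), which supplies the asymptotically sharp input to the differential inequality. The Fourier-type alternative you sketch is recorded separately as Theorem~\ref{buckling_AVP_1}, and it too lands on $zR_1-R_2$ rather than on $R_2$.
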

Throughout the paper, we denote by $|\Omega|$ the measure of $\Omega$ and by $B_d$ the measure of the unit ball in $\mathbb R^d$.


Note that, as expected, the first term of the asymptotic expansion of $N(z)$ is the same as that of the counting function for the Dirichlet Laplacian.

The strategy of the proof of Theorem \ref{weyl_buckling} relies on proving sharp upper and lower bounds for $R_2(z)$. Lower bounds are obtained thanks to a specific application of the so-called ``averaged variational principle'' (see Lemma \ref{AVP}). This technique, introduced in \cite{HaSt14}, gives an efficient derivation of Kr\"oger's inequality for the Neumann eigenvalues of the Laplacian and
has been used to derive various other lower bounds for Riesz means of eigenvalues, see e.g., \cite{bps,hps}. Actually, proving  asymptotically sharp lower bounds on $R_2(z)$ for buckling eigenvalues is quite a hard task, in fact we prove here a weaker lower bound in terms of an inferior limit, see Theorem \ref{R2-lower-thm}. We note that alternative ways to prove pointwise lower bounds may pass through the so-called universal inequalities for eigenvalues, in the tradition of \cite{ppw}. However, it is still an open problem to prove the conjectured sharp version of the Payne-Polya-Weinberger inequality for buckling eigenvalues. People interested in universal inequalities for problem \eqref{buckling} may refer to \cite{CY_1,CY_2} (see also \cite{ashbaugh_buckling,ashbaugh_hermi}). We remark that in order to prove Theorem \ref{R2-lower-thm} we need to know the asymptotic behavior of certain Riesz means for the Dirichlet eigenvalues of the biharmonic operator. Though this result is known and can be found in \cite{agmon_bi,pleijel_bi,safvas}, we shall directly prove it under minimal regularity assumptions on the domain, see Theorem \ref{weyl_dir_bil}.

Upper bounds are obtained first for $R_1(z)$ in the spirit of the Berezin-Li-Yau inequality \cite{Berezin,LiYau}, and then immediately deduced for $R_2(z)$. We note that Levine and Protter in \cite{LePr} state and prove sharp upper bounds for $R_1(z)$ by means of the Berezin-Li-Yau method. However, the claimed upper bound does not follow from their estimates. We present here an improved version of their proof. In particular, the corresponding estimate is asymptotically sharp. To the best of the authors' knowledge, this fallacy does not seem to have ever been mentioned in the literature.

A further purpose of the present article involves the second term in the asymptotic expansion of $N(z)$. Contrary to the cases of the Laplacian and of the Bilaplacian (see e.g., \cite{bps,frank,frank_2,frank_3}), two-term asymptotic expansions are not known for the buckling problem. The eigenvalues \eqref{buckling} are the eigenvalues of a so-called ``operator pencil'' (or ``operator bundle''), and therefore the techniques used in \cite{bps} do not apply to the buckling problem. Here we present a conjecture based on formal considerations (see Section \ref{sec:4}, see also \cite{bps,safvas}).

\begin {conj}\label{conj_weyl} For a bounded domain $\Omega$ in $\mathbb R^d$ with smooth boundary
\begin{equation}\label{buckling-two-term-asympt-N-of-z}
  N(z)=(2\pi)^{-d}B_d|\Omega|z^{\frac{d}{2}}
 -(2\pi)^{1-d}B_{d-1}|\partial\Omega|z^{\frac{d}{2}-\frac{1}{2}}\left(\frac{1}{4}+\frac{\Gamma\left(\frac{d}{2}\right)}{2\pi^{\frac{1}{2}}\Gamma\left(\frac{d}{2}+\frac{1}{2}\right)}\right)+o\left(z^{\frac{d}{2}-\frac{1}{2}}\right).
 \end{equation}
\end{conj}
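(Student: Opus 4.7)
The plan is to recast Conjecture \ref{conj_weyl} as a two-term Weyl asymptotics for the negative eigenvalue count of a single self-adjoint operator and then apply Ivrii-type microlocal analysis. A min-max argument starting from \eqref{buckling_minimax} shows that for every $z>0$,
\begin{equation*}
N(z)=\#\{j:\sigma_j<z\}=N_{-}(L_z),
\end{equation*}
where $L_z:=\Delta^2+z\Delta$ is the self-adjoint operator on $L^2(\Omega)$ associated with the (closable) quadratic form $\int_\Omega\bigl((\Delta v)^2-z|\nabla v|^2\bigr)\,dx$ on $H^2_0(\Omega)$, and $N_{-}$ denotes the finite number of strictly negative eigenvalues. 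This replaces the operator pencil by a single semibounded operator whose spectral counting function I would study by pseudodifferential methods as $z\to+\infty$.

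Next, I would construct a parametrix for the spectral projector $\mathbf{1}_{(-\infty,0)}(L_z)$. Since the principal symbol $|\xi|^4-z|\xi|^2$ is negative precisely on $\{|\xi|<\sqrt{z}\}$, the interior contribution reproduces the leading Weyl term $(2\pi)^{-d}B_d|\Omega|z^{d/2}$ of Theorem \ref{weyl_buckling}. The surface correction is then extracted by straightening $\partial\Omega$ in normal coordinates $(x',x_d)$, performing a tangential Fourier transform with dual variable $\xi'$, and rescaling $x_d\mapsto x_d/\sqrt{z}$, $\tau:=|\xi'|/\sqrt{z}\in[0,1)$. These reductions lead to a boundary contribution of the form
\begin{equation*}
\frac{|\partial\Omega|}{(2\pi)^{d-1}}\,z^{(d-1)/2}\int_{|\eta|<1}\delta(|\eta|)\,d\eta,
\end{equation*}
where $\delta(\tau)$ is the \emph{one-dimensional boundary defect}: the difference at the zero spectral level between the counting functions of the half-line operator
\begin{equation*}
M_\tau=(\partial_s^2-\tau^2)(\partial_s^2-\tau^2+1),\quad\psi(0)=\psi'(0)=0,
\end{equation*}
and its whole-line analogue.

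To evaluate $\delta(\tau)$, the explicit factorisation of $M_\tau$ is key: the factor $\partial_s^2-\tau^2$ always contributes a negative-spectrum branch, whereas $\partial_s^2-\tau^2+1$ does so only for $\tau<1$, which explains why the tangential integral is supported on $|\eta|<1$. A Jost-function/Wronskian analysis with the clamped conditions should decompose $\delta(\tau)$ into two additive pieces: one inherited from $\partial_s^2-\tau^2$, carrying the Dirichlet Laplacian boundary defect and yielding $\tfrac{1}{4}$ after angular integration; the other inherited from $\partial_s^2-\tau^2+1$, carrying the clamped biharmonic boundary layer and yielding $\Gamma(d/2)/[2\pi^{1/2}\Gamma(d/2+1/2)]$. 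Summing reproduces the coefficient in \eqref{buckling-two-term-asympt-N-of-z}, and the resulting expression can be cross-checked against the verified balls and intervals cases.

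The main obstacle lies in the degeneracy of the principal symbol $|\xi|^4-z|\xi|^2$, whose zero set is the full sphere $|\xi|=\sqrt{z}$ rather than a transverse characteristic hypersurface. The standard Ivrii-Melrose machinery for the second Weyl term requires simple characteristics and so must be adapted, for instance in the spirit of the positive elliptic framework of \cite{safvas} applied to the quartic-minus-quadratic symbol; equivalently, one could hope to reduce to a suitable positive fourth-order pencil via a square-root device on the Dirichlet Laplacian. A secondary difficulty is the $o(z^{(d-1)/2})$ remainder, which should rely on a measure-zero periodic billiards condition at energy $\sqrt{z}$, a generic assumption trivially satisfied in the balls and intervals cases where the conjecture is already established.
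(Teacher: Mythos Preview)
The statement is a \emph{conjecture}, which the paper does not prove for general domains. What the paper offers (at the opening of Section~\ref{sec:4}) is a formal derivation via the spectral shift function framework of \cite{safvas}, with the explicit caveat that this framework does not rigorously apply to operator pencils; the resulting formula is then verified only for balls and intervals. So there is no proof to compare against, only a heuristic.

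Your route genuinely differs from the paper's heuristic. The identity $N(z)=N_-(L_z)$ is correct and is a clean way to escape the pencil structure; the paper instead retains the pencil and passes directly to the half-line model $T_{\xi'}^2 v=\eta\, T_{\xi'}v$ of \eqref{auxiliary}, extracting the single scattering phase $\arg_0(ia_1^+/a_1^-)$ to obtain \eqref{shift}. After your rescaling, however, the two models coincide: your $M_\tau$ with clamped data is precisely \eqref{auxiliary} at $\eta=1$, $|\xi'|=\tau$. So at the level of the boundary model the difference is organisational.

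The gap in your proposal is the claimed additive splitting of $\delta(\tau)$ along the factorisation $M_\tau=(\partial_s^2-\tau^2)(\partial_s^2-\tau^2+1)$. The clamped conditions $\psi(0)=\psi'(0)=0$ couple the two second-order factors; neither inherits its own well-posed half-line problem, and there is no Jost/Wronskian mechanism that decomposes the defect along such a product. The paper's computation makes this visible: the shift \eqref{shift} comes from the full fourth-order scattering ratio $a_1^+/a_1^-$, not from a sum of two second-order phases. The paper does remark that the final constant happens to split as $\tfrac14$ (the Dirichlet Laplacian coefficient) plus the Gamma ratio, but this is an observation about the answer, not a derivation. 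Beyond this, the non-transversal characteristic set $|\xi|=\sqrt z$ that you flag is exactly why the authors stop at a conjecture; your proposal names the obstruction but does not remove it.
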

For a more precise discussion on smoothness requirements for two-terms asymptotic expansions for biharmonic eigenvalues we refer to \cite{bps} and to \cite[Chapter 1.6]{safvas}.

Note that the first contribution of the second term (the factor $\frac{1}{4}$) coincides with the second term in the eigenvalue asymptotics for the Dirichlet Laplacian. For the Riesz mean $\displaystyle R_1(z)$ of the buckling problem, formula \eqref{buckling-two-term-asympt-N-of-z} yields the following two-terms asymptotics as $z\rightarrow+\infty$:
\begin{multline}\label{buckling-two-term-asympt-R-1-of-z}
 R_1(z)=\frac{2}{d+2}\,(2\pi)^{-d}B_d|\Omega|z^{\frac{d}{2}+1}
 -\frac{2}{d+1}(2\pi)^{1-d}B_{d-1}|\partial\Omega|z^{\frac{d}{2}+\frac{1}{2}}\left(\frac{1}{4}+\frac{\Gamma\left(\frac{d}{2}\right)}{2\pi^{\frac{1}{2}}\Gamma\left(\frac{d}{2}+\frac{1}{2}\right)}\right)\\+o\left(z^{\frac{d}{2}+\frac{1}{2}}\right).
\end{multline}
We shall prove the validity of the conjecture \eqref{conj_weyl} when $\Omega$ is a ball in $\mathbb R^d$, $d\geq 2$, namely we prove the following theorem.

\begin{thm}\label{conj_ball}
Let $\Omega$ be a ball in $\mathbb R^d$. Then \eqref{buckling-two-term-asympt-N-of-z} holds for the eigenvalues of \eqref{buckling} on $\Omega$.
\end{thm}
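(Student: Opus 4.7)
The plan is to diagonalize the buckling problem on the ball explicitly via separation of variables, reducing it to a family of Bessel-type boundary value problems, and then to extract the two-term asymptotic by combining classical uniform Bessel asymptotics with a careful Euler-Maclaurin summation over angular momenta.

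By scaling we may take $\Omega = B_1 \subset \mathbb R^d$. Expanding eigenfunctions as $v(r,\theta) = f(r) Y_\ell(\theta)$ with $Y_\ell \in \mathcal H_\ell$ a spherical harmonic of degree $\ell \geq 0$, the equation $\Delta^2 v = -\sigma\Delta v$ becomes the radial ODE $(\Delta_\ell + \sigma)\Delta_\ell f = 0$ on $(0,1)$, with $\Delta_\ell = \partial_r^2 + \tfrac{d-1}{r}\partial_r - \tfrac{\ell(\ell+d-2)}{r^2}$. Setting $\nu = \ell + (d-2)/2$, the regular solutions are spanned by $r^\ell$ and $r^{-(d-2)/2}J_\nu(\sqrt\sigma\, r)$. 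Imposing $f(1) = f'(1) = 0$ and using the Bessel identity $xJ_\nu'(x) - \nu J_\nu(x) = -xJ_{\nu+1}(x)$, the $2\times 2$ determinant condition collapses to $J_{\ell + d/2}(\sqrt\sigma) = 0$. Hence the buckling eigenvalues on $B_1$ are exactly $\{j_{\ell+d/2,k}^2 : k \geq 1,\ \ell \geq 0\}$, the eigenvalue at $(\ell,k)$ appearing with multiplicity $m_\ell^{(d)} := \dim \mathcal H_\ell$, and
$$N(z) = \sum_{\ell \geq 0} m_\ell^{(d)}\,\#\bigl\{k \geq 1 : j_{\ell+d/2,k}^2 < z\bigr\}.$$

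To extract the two-term asymptotic I apply Debye's uniform asymptotic expansion of $J_\mu$ with $\mu = \ell + d/2$. In the oscillatory regime where $\mu$ is bounded away from $\sqrt z$, a standard phase-counting argument yields
$$\#\{k : j_{\mu,k} < \sqrt z\} = \frac{1}{\pi}\bigl[\sqrt{z - \mu^2} - \mu\arccos(\mu/\sqrt z)\bigr] + \frac{1}{4} + \text{error},$$
with an error small enough to be absorbed in the final $o(z^{(d-1)/2})$ remainder after being summed against $m_\ell^{(d)}$; the narrow transition region near $\mu = \sqrt z$ is treated by the Langer/Airy uniform asymptotic and its total contribution is shown to be $o(z^{(d-1)/2})$. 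Plugging this count into the $\ell$-sum, using $m_\ell^{(d)} \sim \tfrac{2\ell^{d-2}}{(d-2)!}$, and passing from the sum to an integral via Euler-Maclaurin (equivalently, Poisson summation) reduces the computation to the integrals $\int_0^{\sqrt z}\ell^{d-2}\sqrt{z-\ell^2}\,d\ell$ and $\int_0^{\sqrt z}\ell^{d-1}\arccos(\ell/\sqrt z)\,d\ell$, which by Beta-function identities produce the Weyl volume term $(2\pi)^{-d}B_d|B_1|z^{d/2}$. The boundary-order correction then arises from three independent sources: (a) the $\tfrac14$-shift in the Debye count; (b) the Euler-Maclaurin boundary correction at $\ell \approx \sqrt z$; and (c) the shift $\ell \mapsto \ell + d/2$ inside $\sqrt{\,\cdot\,}$ and $\arccos(\cdot)$. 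Collecting (a), (b), (c) yields the precise boundary coefficient in \eqref{buckling-two-term-asympt-N-of-z}.

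The chief technical difficulty lies in tracking the exact numerical constants through the contributions (a)-(c): the appearance of the extra factor $\Gamma(d/2)/[2\pi^{1/2}\Gamma(d/2 + 1/2)]$, absent from the Dirichlet-Laplacian Weyl expansion on the ball, must be traced cleanly to the unit increment of the Bessel index $\nu \mapsto \nu + 1$ relative to the Dirichlet case. This forces one to handle the square-root singularity of $\sqrt{z - \mu^2}$ at $\mu \nearrow \sqrt z$ with a sharper tool than the classical smooth-integrand Euler-Maclaurin formula, for instance an Abel-Plana summation formula or Poisson summation with a suitable cutoff. A secondary technicality is the quantitative control of the Bessel error terms through the Langer/Airy transition region, which is standard but must be carried out explicitly to ensure that the contribution from $\mu$ near $\sqrt z$ is genuinely $o(z^{(d-1)/2})$.
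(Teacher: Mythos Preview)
Your separation of variables is correct and matches the paper: the buckling eigenvalues on $B_1$ are $\sigma_{d,\ell,n}=j_{\ell+d/2,n}^2$ with multiplicity $m_\ell^{(d)}$. From that point on, however, your route and the paper's diverge sharply.

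You propose a direct attack: Debye/Airy uniform asymptotics for the Bessel zeros, then Euler--Maclaurin or Abel--Plana summation over $\ell$, with a separate treatment of the transition window near $\ell+d/2\approx\sqrt z$. This is the classical machinery used to derive the two-term law for the Dirichlet Laplacian on balls, and in principle it can be made to work here too; the index shift $\nu\mapsto\nu+1$ relative to the Dirichlet case is exactly what produces the extra $\Gamma(d/2)/[2\pi^{1/2}\Gamma(d/2+1/2)]$ in the surface term. But you yourself flag the difficulties: the square-root singularity at $\mu\nearrow\sqrt z$ defeats the smooth Euler--Maclaurin remainder, the Airy regime needs quantitative bounds, and the bookkeeping of constants through (a)--(c) is delicate. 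As written, this is a plausible plan rather than a proof.

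The paper bypasses all of that with a single combinatorial observation. Since $\sigma_{d,\ell,n}=\lambda_{d,\ell+1,n}$ (same Bessel zero, angular momentum shifted by one), one has
\[
N^B_d(z)=\sum_{\ell\ge0}M_{\ell,d}\,N^D_{d,\ell+1}(z)
=\sum_{\ell\ge0}M_{\ell+1,d}\,N^D_{d,\ell+1}(z)-\sum_{\ell\ge0}(M_{\ell+1,d}-M_{\ell,d})\,N^D_{d,\ell+1}(z).
\]
The first sum is $N^D_d(z)-N^D_{d,0}(z)$, and the addition formula $M_{\ell+1,d}-M_{\ell,d}=M_{\ell+1,d-1}$ turns the second sum into $N^D_{d-1}(z)$ plus a remainder $\sum_\ell M_{\ell,d-1}(N^D_{d,\ell}(z)-N^D_{d-1,\ell}(z))$. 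Interlacing of zeros of $J_\nu$ and $J_{\nu+1/2}$ forces each summand to be $0$ or $1$, and since $\ell_{\max}=O(\sqrt z)$ the whole remainder is $O(z^{d/2-1})$. Thus $N^B_d(z)=N^D_d(z)-N^D_{d-1}(z)+O(z^{d/2-1})$, and the two-term expansion follows immediately from the \emph{known} Dirichlet-Laplacian expansion on balls in dimensions $d$ and $d-1$. No Debye asymptotics, no Euler--Maclaurin, no transition-region analysis.

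In short: your approach would reprove the Dirichlet-ball Weyl law from scratch and then perturb it; the paper simply \emph{reduces} to that known law via a one-line multiplicity identity. If you want to complete your version, the most efficient path is to recognize that the only new ingredient relative to the Dirichlet case is the index shift, and to organize the computation as a difference $N^B_d-N^D_d$ from the outset.
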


Note that the two-term formula \eqref{buckling-two-term-asympt-N-of-z} for balls was already computed in \cite{ash_krein} by means of the relation of the buckling problem \eqref{buckling} with a Krein extension of the Laplace operator. We propose here an alternative proof that, although based on similar ideas, relies on explicit identities relating the eigenvalues of the buckling problem to the eigenvalues of the Dirichlet Laplacian which are both given in terms of zeroes of Bessel functions.


Another situation where the eigenvalues are (almost) explicitly given is $d=1$.  We remark that typically in the case $d=1$ the counting function $N(z)$ is not regular enough for a two-terms expansion, therefore we expect the conjecture to hold in its weaker form, namely in the form of \eqref{buckling-two-term-asympt-R-1-of-z}. This situation is known for the eigenvalues of the Dirichlet Laplacian $\pi^2j^2$ on $(0,1)$. In fact, in this case $N(z)=[\frac{z^{\frac{1}{2}}}{\pi}]$ which does not have a two-term expansion with a lower order power. On the other hand, $R_1(z)=\frac{2}{3\pi}z^{\frac{3}{2}}-\frac{z}{2}+o(z)$. An analogous expansion holds for the eigenvalues of the Bilaplacian, see \cite{bps}. We shall prove here a two-terms expansion for the first Riesz mean of problem \eqref{buckling} on a bounded interval, namely we prove the following theorem.
\begin{thm}\label{conj_1d}
Let $d=1$, $L>0$ and $\Omega=(0,L)$. Then \eqref{buckling-two-term-asympt-R-1-of-z} holds for the eigenvalues of \eqref{buckling} on $\Omega$.
\end{thm}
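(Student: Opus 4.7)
The plan is to exploit the explicit one-dimensional spectrum. Writing the general solution of $v''''+\sigma v''=0$ as $v(x)=A+Bx+C\cos(\sqrt\sigma x)+D\sin(\sqrt\sigma x)$ and imposing $v(0)=v'(0)=v(L)=v'(L)=0$ leads, after eliminating the coefficients and setting $t=\sqrt\sigma L$, to the secular equation
\[
  2\sin(t/2)\bigl(t\cos(t/2)-2\sin(t/2)\bigr)=0.
\]
The spectrum splits into two families: $\sigma^{(1)}_k=(2\pi k/L)^{2}$ for $k\in\mathbb N$, and $\sigma^{(2)}_k=(2\mu_k/L)^{2}$, where $\mu_k\in(k\pi,(k+1/2)\pi)$ is the $k$-th positive root of $\tan\mu=\mu$. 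Setting $m_k=(k+1/2)\pi$ and $\mu_k=m_k-\delta_k$ converts $\tan\mu_k=\mu_k$ into $\cot\delta_k=\mu_k$, which yields $\delta_k=1/m_k+O(k^{-3})$ and hence the crucial asymptotic $\mu_k^{2}=m_k^{2}-2+O(k^{-2})$.

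I would then evaluate the contributions $R_1^{(1)}(z)=\sum_{k\geq 1}(z-\sigma^{(1)}_k)_+$ and $R_1^{(2)}(z)=\sum_{k\geq 1}(z-\sigma^{(2)}_k)_+$ separately. The first is exactly the Riesz mean of the Dirichlet Laplacian on $(0,L/2)$, whose classical two-term expansion
\[
  R_1^{(1)}(z)=\frac{Lz^{3/2}}{3\pi}-\frac{z}{2}+O(\sqrt z)
\]
follows from an explicit summation based on $\sum_{k=1}^{M}k^{2}=M(M+1)(2M+1)/6$ with $M=\lfloor L\sqrt z/(2\pi)\rfloor$. For the second family, set $\alpha=L\sqrt z/2$. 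The identity $\alpha^{2}-\mu_k^{2}=(\alpha^{2}-m_k^{2})+2+O(k^{-2})$ lets me reduce
\[
  \sum_{\mu_k<\alpha}(\alpha^{2}-\mu_k^{2})=\sum_{m_k<\alpha}(\alpha^{2}-m_k^{2})+2N_2(z)+O(1),
\]
where the $O(1)$ absorbs the boundary fluctuation from swapping $\{\mu_k<\alpha\}\leftrightarrow\{m_k<\alpha\}$, since the possibly discarded term satisfies $|\alpha^{2}-\mu_{M+1}^{2}|\lesssim \alpha\,\delta_{M+1}=O(1)$. A direct summation, analogous to the one for the first family but with the half-integer shift $m_k=(k+1/2)\pi$, gives $\sum_{m_k<\alpha}(\alpha^{2}-m_k^{2})=\frac{2\alpha^{3}}{3\pi}-\alpha^{2}+O(\alpha)$. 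Since $2N_2(z)\asymp \alpha$ is $O(\sqrt z)$, multiplying through by $4/L^{2}$ yields
\[
  R_1^{(2)}(z)=\frac{Lz^{3/2}}{3\pi}-z+O(\sqrt z).
\]

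Summing the two contributions produces $R_1(z)=\frac{2L}{3\pi}z^{3/2}-\frac{3}{2}z+o(z)$, which matches \eqref{buckling-two-term-asympt-R-1-of-z} in dimension one: with $|\Omega|=L$, $|\partial\Omega|=2$, $B_1=2$, $B_0=1$, and $\Gamma(1/2)/(2\pi^{1/2}\Gamma(1))=1/2$, the second-term coefficient equals $-1\cdot 2\cdot(1/4+1/2)=-3/2$. The main technical obstacle is the second family: one has to identify correctly that the accumulated gap $\mu_k^{2}-m_k^{2}\to-2$ generates only an $O(\sqrt z)$ contribution (hence invisible at the $o(z)$ scale), so that the non-trivial $-z$ in $R_1^{(2)}$ comes entirely from the polynomial evaluation of the half-integer-shifted sum $\sum_{m_k<\alpha}(\alpha^{2}-m_k^{2})$, and to handle cleanly the possible boundary index $k=M+1$ for which $\mu_{M+1}<\alpha<m_{M+1}$ can occur.
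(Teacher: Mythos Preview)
Your argument is correct and reaches the same expansion $R_1(z)=\frac{2L}{3\pi}z^{3/2}-\frac{3}{2}z+O(\sqrt z)$ as the paper. The underlying idea is identical to the paper's: compute the eigenvalues explicitly from the secular equation, observe that they are $O(1)$-perturbations of an arithmetic progression squared, and evaluate $R_1(z)$ by an elementary polynomial sum.

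The organization differs. The paper keeps all eigenvalues in a single ordered sequence $\sigma_j=(\pi(j+1)-t_j)^2$ with $t_j=0$ for odd $j$ and $t_j\sim 4/(\pi(j+1))$ for even $j$, notes that the correction $2\pi(j+1)t_j-t_j^2$ is uniformly $O(1)$, hence contributes only $O(k)=O(\sqrt z)$, and is left with the single sum $\sum_{j=1}^{k}(z-\pi^2(j+1)^2)$ evaluated for $k=[\frac{\sqrt z-1}{\pi}]$. You instead keep the two spectral families separate, compute each via its own polynomial summation (with the half-integer shift $m_k=(k+\tfrac12)\pi$ in the second), and add at the end. The paper's unified indexing is a bit more economical, avoiding the family split and the boundary-index bookkeeping; your decomposition has the advantage of making explicit that the $-z/2$ comes from the ``Dirichlet'' family and the $-z$ from the ``$\tan\mu=\mu$'' family. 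One minor slip: when you swap $\{\mu_k<\alpha\}\leftrightarrow\{m_k<\alpha\}$ after already replacing $\mu_k^2$ by $m_k^2-2$, the boundary term you should be bounding is $|\alpha^2-m_{M+1}^2|$, not $|\alpha^2-\mu_{M+1}^2|$; the same estimate $\lesssim\alpha\,\delta_{M+1}=O(1)$ applies either way, so the argument is unaffected.
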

In the one-dimensional case we are also able to prove the following inequality, which is not strictly related with two-terms asymptotics, but which has an interest per se:
$$
\Lambda_j>\lambda_j\sigma_j
$$
for all $j\in\mathbb N$ (see Proposition \ref{generalized_payne_thm}). Here $\lambda_j$ and $\Lambda_j$ denote the eigenvalues of the Laplacian and the Bilaplacian with Dirichlet conditions, respectively. This is a generalization of an inequality by Payne \cite{payne}, holding for $j=1$ in $d$ dimensions. It is natural to expect that a similar behavior occurs for higher dimensions and all $j$. We are led to state the following conjecture.
\begin{conj}
For a bounded domain $\Omega$ in $\mathbb R^d$ with smooth boundary
\begin{equation*}
\Lambda_j>\lambda_j\sigma_j,
\end{equation*}
for all $j\in\mathbb N$.
\end{conj}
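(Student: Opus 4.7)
The approach I would pursue takes its cue from Payne's proof of the case $j=1$. On $H^2_0(\Omega)\setminus\{0\}$ one has the pointwise factorization
\begin{equation*}
R_\Lambda(u):=\frac{\int_\Omega (\Delta u)^2\,dx}{\int_\Omega u^2\,dx}=\underbrace{\frac{\int_\Omega (\Delta u)^2\,dx}{\int_\Omega |\nabla u|^2\,dx}}_{R_\sigma(u)}\cdot\underbrace{\frac{\int_\Omega |\nabla u|^2\,dx}{\int_\Omega u^2\,dx}}_{R_\lambda(u)}.
\end{equation*}
Evaluating at the first clamped Bilaplacian eigenfunction $U_1\in H^2_0$, the min-max principles for $\sigma_1$ and $\lambda_1$ yield $\Lambda_1=R_\sigma(U_1)R_\lambda(U_1)\geq\sigma_1\lambda_1$, with strict inequality on generic domains because $U_1$ is not simultaneously the first buckling and first Dirichlet Laplacian eigenfunction. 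For general $j$ the plan is to promote this identity via the max-min (Courant--Fischer) characterization
\begin{equation*}
\Lambda_j=\max_{\substack{W\subset L^2(\Omega)\\\dim W=j-1}}\min_{\substack{u\in H^2_0(\Omega)\setminus\{0\}\\u\perp_{L^2}W}}R_\Lambda(u),
\end{equation*}
by exhibiting a $(j-1)$-dimensional subspace $W$ such that $L^2$-orthogonality to $W$ forces simultaneously $R_\lambda(u)\geq\lambda_j$ and $R_\sigma(u)\geq\sigma_j$; the product identity then delivers $R_\Lambda(u)\geq\sigma_j\lambda_j$ on the orthogonal complement, and taking the maximum over $W$ gives the desired bound.

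The first condition is easy to arrange: if $W$ contains the first $j-1$ Dirichlet Laplacian eigenfunctions $\phi_1,\dots,\phi_{j-1}$, then $\int|\nabla u|^2\geq\lambda_j\int u^2$ whenever $u\perp_{L^2}W$. The difficulty lies in forcing the second condition, since the natural way to enforce $R_\sigma(u)\geq\sigma_j$ is via $\nabla$-orthogonality to the first $j-1$ buckling eigenfunctions $v_1,\dots,v_{j-1}$, which lives in a different inner product. Naively taking $W=\mathrm{span}\{\phi_k\}_{k<j}+\mathrm{span}\{v_k\}_{k<j}$ has dimension up to $2j-2$, which violates the dimension constraint in the max-min and at best yields $\Lambda_{2j-1}\geq\sigma_j\lambda_j$, not the desired bound at matched index $j$. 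Strictness in the final inequality should come from the Cauchy--Schwarz equality case $\bigl(\int|\nabla u|^2\bigr)^2=\int u^2\cdot\int(\Delta u)^2$, which requires $\Delta u\propto u$; such a $u\in H^2_0$ cannot also be a Dirichlet Laplacian eigenfunction except in very degenerate geometries.

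The main obstacle, then, is reconciling the two orthogonality structures within a single $(j-1)$-dimensional subspace. Possible routes I would try: (i) simultaneously represent the three quadratic forms $\int(\Delta\cdot)^2$, $\int|\nabla\cdot|^2$, $\int(\cdot)^2$ as matrices on the span of the first $j$ buckling eigenfunctions (where the first two are diagonal) and translate the conjecture into an inequality between the $j$-th generalized eigenvalue of the triple and the product of the $j$-th eigenvalues of the two factor problems; (ii) exploit the commutation-type identity $-\Delta(-\Delta v_k)=\sigma_k(-\Delta v_k)$ in $\Omega$ (with non-standard boundary behavior of $-\Delta v_k$) to interpret buckling as a Laplace eigenvalue problem on the subspace $(-\Delta)H^2_0\subset L^2$ and apply spectral interlacing; (iii) combine either of these with a universal inequality of Payne--P\'olya--Weinberger or Yang type, in the spirit of \cite{CY_1,CY_2}, to quantify the gap. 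In the one-dimensional setting (Proposition \ref{generalized_payne_thm}) the explicit characteristic equations allow a direct comparison, but the genuinely new step for $d\geq 2$ is to bridge the orthogonality mismatch just described, which is why the conjecture remains open.
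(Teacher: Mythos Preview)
Your assessment is essentially correct, and there is nothing further to compare: the statement in question is a \emph{conjecture} in the paper, not a theorem, and the paper offers no proof of it beyond the one-dimensional case (Proposition~\ref{generalized_payne_thm}), which is handled exactly as you say by direct comparison of the explicit roots in \eqref{1d-buckling-ev-explicit} and \eqref{1d-dir-ev-explicit}. The paper does not attempt any of the routes (i)--(iii) you sketch; it simply records the inequality \eqref{generalized_payne}, $\Lambda_j\ge\max\{\lambda_1\sigma_j,\lambda_j\sigma_1\}$, notes that the variational characterizations alone do not seem to yield anything stronger, and leaves the matched-index inequality $\Lambda_j>\lambda_j\sigma_j$ as an open problem for $d\ge 2$.

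Your diagnosis of the obstruction---that forcing $R_\lambda(u)\ge\lambda_j$ requires $L^2$-orthogonality to $\phi_1,\dots,\phi_{j-1}$ while forcing $R_\sigma(u)\ge\sigma_j$ requires $\nabla$-orthogonality to $v_1,\dots,v_{j-1}$, and that combining both naively costs $2j-2$ constraints rather than $j-1$---is exactly the reason the straightforward min-max argument stalls. This is consistent with the paper's remark that the variational characterization ``does not allow for any improvement of \eqref{generalized_payne}.'' So your proposal is not a proof, but it is an accurate account of where the difficulty lies, and it goes somewhat further than the paper in articulating possible strategies.
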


The present paper is organized as follows. In Section \ref{sec:2} we prove lower and upper bounds for Riesz means of buckling eigenvalues (Theorems \ref{R2-lower-thm} and \ref{R1-upper-thm}, respectively) and combine them to prove our main Theorem \ref{weyl_buckling}. In Section \ref{sec:4} we prove Theorem \ref{conj_ball}. Section \ref{sec:3} is devoted to the study of the one-dimensional buckling problem and to the proof of Theorem \ref{conj_1d}.


\section{Weyl's law for buckling eigenvalues} \label{sec:2}

The aim of this section is to prove Theorem \ref{weyl_buckling}. To do so we shall prove lower and upper bounds for $R_2(z)$. First we need to recall a few useful results on the eigenvalues of the Dirichlet Laplacian and Bilaplacian.

Through the rest of the paper we shall denote by $\|f\|_2$ the standard $L^2$-norm of a square integrable function $f$ on $\Omega$. By $H^1_0(\Omega)$ we denote the closure of $C^{\infty}_c(\Omega)$ (the space of smooth functions compactly supported in $\Omega$) with respect to the norm $\left(\|f\|_2^2+\|\nabla f\|_2^2\right)^{\frac{1}{2}}$. By $H^2_0(\Omega)$ we denote the closure of $C^{\infty}_c(\Omega)$ with respect to the norm $\left(\|f\|_2^2+\|\Delta f\|_2^2\right)^{\frac{1}{2}}$.


\subsection{Dirichlet problems}
We recall here a few facts on the eigenvalues of the Laplacian and of the Bilaplacian with Dirichlet boundary conditions. Let $\Omega$ be a domain in $\mathbb R^d$ of finite measure. The eigenvalue problem for the Dirichlet  Laplacian reads
\begin{equation}
\label{dirichlet_laplacian}
\left\{\begin{array}{ll}
-\Delta u=\lambda u, & \text{in\ }\Omega,\\
u=0, & \text{on\ }\partial\Omega,
\end{array}\right.
\end{equation}
while the eigenvalue problem for the Dirichlet Bilaplacian reads
\begin{equation}
\label{dirichlet_bilaplacian}
\left\{\begin{array}{ll}
\Delta^2 U=\Lambda U, & \text{in\ }\Omega,\\
U=\partial_\nu U=0, & \text{on\ }\partial\Omega.
\end{array}\right.
\end{equation}
Also problems \eqref{dirichlet_laplacian} and \eqref{dirichlet_bilaplacian} are understood in the weak sense, and admit non-decreasing sequences of positive eigenvalues of finite multiplicity, given by
$$
0<\lambda_1<\lambda_2\leq\cdots\leq\lambda_j\leq\cdots\nearrow+\infty
$$
and
$$
0<\Lambda_1\leq\Lambda_2\leq\cdots\leq\Lambda_j\leq\cdots\nearrow+\infty
$$
respectively. The corresponding eigenfunctions, denoted by $\left\{u_j\right\}_j$ and $\left\{U_j\right\}_j$, belong respectively to $H^1_0(\Omega)$ and to $H^2_0(\Omega)$, and form orthonormal bases of $L^2(\Omega)$.

The eigenvalues are variationally characterized as
\begin{equation}\label{minmax-dirichlet}
\lambda_j=\min_{\substack{U\subset H^1_0(\Omega)\\{\rm dim\  U=j}}}\max_{u\in U\setminus\{0\}}\frac{\int_{\Omega}|\nabla u|^2dx}{\int_{\Omega}u^2dx}
\end{equation}
and
$$
\Lambda_j=\min_{\substack{U\subset H^2_0(\Omega)\\{\rm dim \ U=j}}}\max_{u\in U\setminus\{0\}}\frac{\int_{\Omega}(\Delta u)^2dx}{\int_{\Omega}u^2dx}.
$$
We recall the well-known Berezin-Li-Yau upper bound for $\lambda_j$ (see \cite{Berezin,LiYau}).
\begin{thm}\label{berezin}
Let $\Omega$ be a domain in $\mathbb R^d$ of finite measure. For all $z>0$
$$
\sum_j(z-\lambda_j)_+\leq \frac{2}{d+2}(2\pi)^{-d}B_d|\Omega|z^{1+\frac{d}{2}}.
$$
The inequality holds true if we replace $\lambda_j$ with $\|\nabla f_j\|_2^2$, where $\{f_j\}_j$ is a $L^2$-orthonormal family in $H^1_0(\Omega)$. 
\end{thm}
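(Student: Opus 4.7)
The plan is to prove directly the more general orthonormal-family statement; the bound on $\lambda_j$ then follows by choosing $f_j=u_j$ (the $L^2$-normalized Dirichlet eigenfunctions), since the weak formulation gives $\|\nabla u_j\|_2^2=\lambda_j$.  First I would extend each $f_j$ by zero to $\mathbb{R}^d$; since $f_j\in H^1_0(\Omega)$ the extension lies in $H^1(\mathbb{R}^d)$ and $\{f_j\}$ remains orthonormal in $L^2(\mathbb{R}^d)$.  With the convention $\hat f_j(\xi)=(2\pi)^{-d/2}\int_{\mathbb{R}^d}f_j(x)e^{-i\xi\cdot x}\,dx$, Plancherel yields $\|\nabla f_j\|_2^2=\int_{\mathbb{R}^d}|\xi|^2|\hat f_j(\xi)|^2\,d\xi$ and $\int_{\mathbb{R}^d}|\hat f_j(\xi)|^2\,d\xi=1$, so
$$
z-\|\nabla f_j\|_2^2=\int_{\mathbb{R}^d}(z-|\xi|^2)|\hat f_j(\xi)|^2\,d\xi\leq\int_{\mathbb{R}^d}(z-|\xi|^2)_+|\hat f_j(\xi)|^2\,d\xi.
$$
The right-hand side is nonnegative, so the inequality persists with the positive part on the left.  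Summing in $j$ and invoking Tonelli (every integrand is nonnegative) produces
$$
\sum_j(z-\|\nabla f_j\|_2^2)_+\leq\int_{\mathbb{R}^d}(z-|\xi|^2)_+\,\rho(\xi)\,d\xi,\qquad\rho(\xi):=\sum_j|\hat f_j(\xi)|^2.
$$

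The second step is the pointwise bound $\rho(\xi)\leq(2\pi)^{-d}|\Omega|$.  For each $\xi$, set $e_\xi(x):=(2\pi)^{-d/2}\mathbf{1}_\Omega(x)e^{-i\xi\cdot x}\in L^2(\mathbb{R}^d)$; since $f_j$ vanishes outside $\Omega$, one has $\hat f_j(\xi)=(f_j,e_\xi)_{L^2(\mathbb{R}^d)}$.  Bessel's inequality for the orthonormal family $\{f_j\}$ then gives $\rho(\xi)=\sum_j|(f_j,e_\xi)|^2\leq\|e_\xi\|_2^2=(2\pi)^{-d}|\Omega|$.  Substituting this into the previous display and computing the elementary integral in spherical coordinates,
$$
\int_{\mathbb{R}^d}(z-|\xi|^2)_+\,d\xi=B_dz^{1+\frac{d}{2}}-\frac{dB_d}{d+2}z^{1+\frac{d}{2}}=\frac{2}{d+2}B_dz^{1+\frac{d}{2}},
$$
yields exactly the claimed estimate.

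I do not foresee a real obstacle: this is essentially Berezin's original argument.  The only mildly delicate points are the interchange of sum and integral, handled by nonnegativity via Tonelli, and the observation that one does \emph{not} need a full bathtub-principle argument exploiting an additional mass constraint $\int_{\mathbb{R}^d}\rho(\xi)\,d\xi=\#\{j\}$: because $(z-|\xi|^2)_+$ has compact support, the pointwise bound on the ``semi-classical density'' $\rho$ alone is already sharp and sufficient to close the estimate.
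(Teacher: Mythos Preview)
Your proof is correct and is essentially Berezin's original argument. The paper does not actually prove this theorem: it merely recalls it as a known result, citing \cite{Berezin,LiYau}, so there is no ``paper's own proof'' to compare against. Your write-up supplies exactly the standard Fourier-transform/Bessel-inequality proof that those references contain, and all the technical points (extension by zero, Tonelli, the pointwise bound on $\rho(\xi)$, the radial integral) are handled cleanly.
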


Let us state now Weyl's law for the eigenvalue $\Lambda_j$. To be self-contained, we shall also include a proof.

\begin{thm}\label{weyl_dir_bil}
Let $\Omega$ be a domain in $\mathbb R^d$ of finite measure. Then
$$
\lim_{z\rightarrow+\infty}z^{-2-\frac{d}{2}}\sum_j(z^2-\Lambda_j)_+= \frac{4}{d+4}(2\pi)^{-d}B_d|\Omega|.
$$
\end{thm}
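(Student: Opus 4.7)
The goal is to sandwich the Riesz mean $\sum_j(z^2 - \Lambda_j)_+$ between upper and lower bounds both sharing the asymptotic $\frac{4}{d+4}(2\pi)^{-d}B_d|\Omega|\,z^{2+d/2}$ as $z\to\infty$. Setting $w := z^2$, this is the standard first Riesz mean of the Dirichlet biharmonic operator, and the target is $\sum_j(w-\Lambda_j)_+ \sim \frac{4}{d+4}(2\pi)^{-d} B_d |\Omega|\, w^{1+d/4}$; a Berezin-Li-Yau-type upper bound and a Kr\"oger-style averaged-variational-principle lower bound are the natural ingredients.

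For the upper bound I would prove the Berezin-Li-Yau inequality for the Dirichlet Bilaplacian,
\[
\sum_j (w - \Lambda_j)_+ \le \frac{4}{d+4}(2\pi)^{-d} B_d |\Omega|\, w^{1+d/4},
\]
via the standard phase-space trace estimate: extending the eigenfunctions $\{U_j\}_{\Lambda_j<w}$ by zero to $\mathbb R^d$ and using Plancherel together with the Bessel inequality $\sum_j|\widehat{U_j}(\xi)|^2 \le |\Omega|$ bounds the left-hand side by $|\Omega|(2\pi)^{-d}\int_{\mathbb R^d}(w-|\xi|^4)_+\,d\xi$, and a direct computation evaluates this free-space integral to the stated quantity. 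This argument only needs $|\Omega|<+\infty$. A conceptually simpler alternative uses the bound $\Lambda_j\ge\lambda_j^2$, which follows on $H^2_0(\Omega)$ from the integration-by-parts and Cauchy-Schwarz inequality $(\int|\nabla u|^2)^2\le\|u\|_2^2\|\Delta u\|_2^2$ combined with both minimax characterizations; this yields $\sum_j(z^2-\Lambda_j)_+\le\sum_j(z^2-\lambda_j^2)_+=2\int_0^z t\,N_{\mathrm{Lap}}(t)\,dt$, and Weyl's law for the Dirichlet Laplacian under \eqref{M} delivers the same sharp constant after integration.

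For the lower bound I would apply the averaged variational principle (Lemma \ref{AVP}) to the two-parameter family
\[
\phi_{y,\xi}(x) := e^{\,i\,\xi\cdot x}\,\eta_h(x-y), \qquad (y,\xi)\in\Omega_h\times\mathbb R^d,
\]
where $\Omega_h := \{x \in \Omega : \mathrm{dist}(x,\partial\Omega)>h\}$ and $\eta_h$ is a fixed smooth radial cut-off supported in the ball of radius $h$ with $\|\eta_h\|_2=1$. Each $\phi_{y,\xi}$ lies in $H^2_0(\Omega)$ by the support condition, and a direct computation gives a Rayleigh quotient of the form $|\xi|^4 + c_1(\eta)\,|\xi|^2 h^{-2} + c_2(\eta)\, h^{-4}$ with constants depending only on the fixed profile $\eta$. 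The AVP combined with Plancherel in the $\xi$-variable then yields
\[
\sum_j(w-\Lambda_j)_+ \ge (2\pi)^{-d}\int_{\Omega_h}\!\!\int_{\mathbb R^d}\bigl(w-|\xi|^4-c_1|\xi|^2 h^{-2}-c_2 h^{-4}\bigr)_+\,d\xi\,dy,
\]
and letting $h=h(w)\to 0$ slowly (so the perturbations become lower order than $w^{1+d/4}$), together with $|\Omega_h|\ge|\Omega|-|\omega_h|\to|\Omega|$ by \eqref{M}, concludes $\liminf_{w\to\infty} w^{-1-d/4}\sum_j(w-\Lambda_j)_+\ge\frac{4}{d+4}(2\pi)^{-d}B_d|\Omega|$.

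The main technical difficulty is reconciling the two opposing roles of the cut-off scale $h$: it must shrink so that $|\Omega_h|\to|\Omega|$ (this is precisely where condition \eqref{M} enters quantitatively, via $|\omega_h|\le c_\Omega h^{\gamma}$), yet not so fast that the subleading terms $c_1|\xi|^2 h^{-2}$ and $c_2 h^{-4}$ in the Rayleigh quotient spoil the leading symbol $|\xi|^4$ over the effective range $|\xi|^4\lesssim w$. A balanced choice such as $h\sim w^{-\alpha}$ with $\alpha\in(0,1/4)$ reconciles the two constraints and drives the liminf up to the sharp constant, matching the upper bound.
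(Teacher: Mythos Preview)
Your proposal is correct and follows essentially the same route as the paper: a Berezin--Li--Yau upper bound (the paper cites \cite{Lap1997}) together with an averaged-variational-principle lower bound using smooth cutoffs supported away from the boundary (the paper cites \cite{bps}), with condition \eqref{M} entering precisely to make $|\omega_h|$ a lower-order correction. The only cosmetic difference is that the cited lower bound uses a single global cutoff $\phi\in H^2_0(\Omega)$ and averages over the frequency variable alone, whereas you use the two-parameter coherent-state family $e^{i\xi\cdot x}\eta_h(x-y)$; both implementations produce the same leading term and require the same balance $h\sim w^{-\alpha}$, $\alpha<1/4$.
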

\begin{proof}
The proof is a direct consequence of asymptotically sharp upper and lower bounds on $\sum_j(z-\Lambda_j)_+$. Upper bounds in the spirit of Berezin-Li-Yau are classical and are proved for domains of finite measure in \cite{Lap1997}:
\begin{equation}\label{BLY-bilaplacian}
\sum_{j}(z-\Lambda_j)_+\leq\frac{4}{d+4}(2\pi)^{-d}B_d|\Omega|z^{\frac{d}{4}+1},
\end{equation}
for all $z>0$.

Asymptotically sharp lower bounds are proved in \cite{bps} under certain conditions on the size of a tubular neighborhood of the boundary. Actually, in \cite{bps} we were interested in estimating the size of the second term of the bounds, however if one is interested only in the asymptotic behavior (i.e., on the first term), it is sufficient to assume $\Omega$ of finite measure.

For the reader's convenience, we shall briefly explain this last fact. It follows from \cite[Theorem C and Lemma 4.4]{bps} that for any domain $\Omega$ of $\mathbb R^d$ of finite measure, all $z>0$ and all sufficiently small $h>0$
\begin{multline}\label{AVP_bilaplacian}
\sum_{j}(z-\Lambda_j)_+\geq\frac{4}{d+4}(2\pi)^{-d}B_d|\Omega|z^{\frac{d}{4}+1}\\
+\frac{4}{d+4}(2\pi)^{-d}B_d|\Omega|\left(\left(z-\frac{|\omega_h|\tilde A_d^2}{(|\Omega|-|\omega_h|)h^4}\right)^{\frac{d}{4}+1}-z^{\frac{d}{4}+1}\right)\\
-\frac{4}{d+4}(2\pi)^{-d}B_d|\omega_h|\left(z-\frac{|\omega_h|\tilde A_d^2}{(|\Omega|-|\omega_h|)h^4}\right)^{\frac{d}{4}+1}\\
-2(2\pi)^{-d}\frac{B_d|\omega_h|A_d^2}{h^2}\left(z-\frac{|\omega_h|\tilde A_d^2}{(|\Omega|-|\omega_h|)h^4}\right)^{\frac{d}{4}+\frac{1}{2}},
\end{multline}
for some $\tilde A_d,A_d>0$ depending only on $d$. Here by $\omega_h$ we denote the inner open tube of size $h$ around $\partial\Omega$, namely
$$
\omega_h=\{x\in\Omega:{\rm dist}(x,\partial\Omega)<h\}.
$$
Choosing $h=C z^{-\frac{1}{4}}$ for some $C>0$ (which can be chosen in an optimal way), we deduce that the lower bound is asymptotically sharp provided that $\lim_{h\rightarrow 0^+}|\omega_h|=0$. This is always the case if $\Omega$ has finite measure, by dominated convergence theorem, regardless of the Lebesgue measure of $\partial\Omega$ (which in principle may be different from zero).

We refer to \cite{bps} for more details on how \eqref{AVP_bilaplacian} is obtained as an application of the averaged variational principle (see also Theorem \ref{buckling_AVP_1}) and to \cite{hps} for the analogue analysis in the case of the Dirichlet Laplacian.
\end{proof}

The result of Theorem \ref{AVP_bilaplacian} is not new. In fact, Weyl's law for Dirichlet problems (Laplacian and polyharmonc operators) on unbounded domains has been proved in \cite{roz_72} through a quite sophisticated version of Dirichlet-Neumann bracketing.

\begin{rem}
Sometimes it is interesting to estimate the second term in \eqref{AVP_bilaplacian} in a more explicit way (see \cite{bps,hps}). This requires more knowledge on the behavior of $|\omega_h|$ as $h\rightarrow 0^+$. A natural assumption is that $\Omega$ satisfies
\begin{equation}\label{M}
 \limsup_{h\rightarrow 0^+}\frac{|\omega_h|}{h^{\gamma}}<+\infty
\end{equation}
for some $\gamma\in(0,1]$.  Note that bounded Lipschitz domains satisfy condition \eqref{M} with $\gamma=1$. However much less regular domains satisfy condition \eqref{M} with $\gamma>0$, such as domains with cusps or fractal domains (see e.g., \cite[Remark 4.6]{hps}). The class of domains enjoying \eqref{M} includes also suitable unbounded domains.
\end{rem}


\subsection{Lower bounds for Riesz means}

We start by quoting the ``averaged variational principle'', which will be used in this subsection, with the formulation provided in \cite{EHIS}.
\begin{lem}\label{AVP}
Consider a self-adjoint operator $H$ on a Hilbert space $\mathcal{H}$,
the spectrum of which is discrete at least in its lower portion, so that
$- \infty < \omega_1 \le \omega_2 \le \dots$.
The corresponding orthonormalized eigenvectors
are denoted $\{\mathbf{\psi}^{(j)}\}$.  The
closed quadratic form corresponding to $H$
is denoted $Q(\varphi, \varphi)$ for vectors $\varphi$ in
the quadratic-form domain $\mathcal{Q}(H) \subset \mathcal{H}$.
Let $f_\zeta \in \mathcal{Q}(H)$ be
a family of
vectors indexed by
a variable $\zeta$ ranging over
a measure space $(\mathfrak{M},\Sigma,\sigma)$.
Suppose that $\mathfrak{M}_0$ is a
subset of $\mathfrak{M}$.  Then for any  $z \in \mathbb{R}$,
\begin{equation}\label{RieszVersion}
	\sum_{j}{\left(z - \omega_j\right)_{+} \int_{\mathfrak{M}}\left|\langle\mathbf{\psi}^{(j)}, f_\zeta\rangle\right|^2\,d \sigma}
       \geq
	\int_{\mathfrak{M}_0}{\left(z\| f_\zeta\|^2 - Q(f_\zeta,f_\zeta) \right) d \sigma},
\end{equation}
provided that the integrals converge.
\end{lem}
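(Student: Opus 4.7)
The plan is to prove a pointwise inequality valid for each $\zeta\in\mathfrak{M}$ and then integrate it. Concretely, I aim to establish, for each $\zeta$, the bound
\begin{equation*}
z\|f_\zeta\|^2 - Q(f_\zeta,f_\zeta)\;\le\;\sum_j (z-\omega_j)_+\,\bigl|\langle\psi^{(j)},f_\zeta\rangle\bigr|^2.
\end{equation*}
Once this is in hand, \eqref{RieszVersion} follows in two steps: first integrate the pointwise bound over $\mathfrak{M}_0$; then, since every summand on the right-hand side is nonnegative, enlarge the $\zeta$-integration from $\mathfrak{M}_0$ to $\mathfrak{M}$, preserving the inequality. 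The exchange of $\sum_j$ and $\int_{\mathfrak{M}}$ is justified by Tonelli's theorem under the stated convergence assumption.

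To prove the pointwise bound I would invoke the spectral theorem for the self-adjoint operator $H$. Let $\mu_{f_\zeta}$ denote the scalar spectral measure associated with $H$ and $f_\zeta$. Since $f_\zeta\in\mathcal{Q}(H)$, one has $\|f_\zeta\|^2=\int d\mu_{f_\zeta}(t)$ and $Q(f_\zeta,f_\zeta)=\int t\,d\mu_{f_\zeta}(t)$, hence
\begin{equation*}
z\|f_\zeta\|^2 - Q(f_\zeta,f_\zeta)\;=\;\int_{\mathbb R}(z-t)\,d\mu_{f_\zeta}(t)\;\le\;\int_{\mathbb R}(z-t)_+\,d\mu_{f_\zeta}(t),
\end{equation*}
where the inequality is the trivial $(z-t)\le (z-t)_+$. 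The atomic part of $\mu_{f_\zeta}$ carried by the eigenvalues $\{\omega_j\}$ contributes exactly $\sum_j (z-\omega_j)_+\,|\langle\psi^{(j)},f_\zeta\rangle|^2$, which is the desired right-hand side. Under the hypothesis that the spectrum of $H$ is discrete in its lower portion, in the range of $z$ for which the lemma is meaningful (namely $z$ below the bottom of the essential spectrum) the remaining portion of $\sigma(H)$ sits in $[z,+\infty)$, where $(z-t)_+=0$. Thus the pointwise bound follows.

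The main technical subtlety is precisely this last point: one must ensure that the cutoff $(z-t)_+$ annihilates the non-discrete portion of the spectral measure, which is automatic when $z$ lies below the essential spectrum of $H$. This is the regime relevant to the applications in the body of the paper, where Lemma \ref{AVP} will be invoked with $H$ arising from an eigenvalue problem having purely discrete spectrum and $z$ ranging over values for which the Riesz mean is a finite sum over the low-lying eigenvalues. Beyond this spectral point, the remaining ingredients—namely the spectral calculus rewriting of $Q(f_\zeta,f_\zeta)$, Tonelli's theorem, and the monotonicity enlargement from $\mathfrak{M}_0$ to $\mathfrak{M}$—are routine.
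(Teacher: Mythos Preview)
Your argument is correct and is essentially the standard proof of the averaged variational principle. Note, however, that the paper does not actually prove Lemma~\ref{AVP}: it is quoted verbatim from \cite{EHIS} (see also \cite{HaSt14}) and used as a black box, so there is no ``paper's own proof'' to compare with. Your pointwise spectral-measure inequality followed by integration over $\mathfrak{M}_0$ and monotone enlargement to $\mathfrak{M}$ is precisely the argument given in those references. The caveat you flag---that one needs $z$ below the bottom of the essential spectrum so that $(z-t)_+$ vanishes on the non-atomic part of the spectral measure---is the right observation, and indeed every application of the lemma in the paper is to operators with purely discrete spectrum, where the issue does not arise.
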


We recall that the eigenfunctions $v_j$ of problem \eqref{buckling} satisfy the following normalization condition
\begin{equation*}
  \int_{\Omega}\nabla v_j\cdot\nabla v_k \,dx=\delta_{jk},
\end{equation*}
where $\delta_{jk}$ denotes the Kronecker delta. In particular,
\begin{equation*}
  \sum_{j=1}^{\infty}\bigg|\int_{\Omega}\nabla v_j\cdot\nabla u \,dx\bigg|^2=\int_{\Omega}|\nabla u|^2 \,dx,
\end{equation*}
for each $u\in H^2_0(\Omega)$. Moreover, $\{\nabla v_j\}_j$ forms an orthonormal system in $(L^2(\Omega))^d$  and in particular
\begin{equation}\label{buckling-eigenfct-Bessel-ineq-vectors}
  \sum_{j=1}^{\infty}\bigg|\int_{\Omega}\nabla v_j\cdot V \,dx\bigg|^2\leq \int_{\Omega}|V|^2 \,dx,
\end{equation}
for $V\in (L^2(\Omega))^d$.

We prove a first useful lower bound for $R_2(z)$.

\begin{thm}\label{R2-lower-thm}
Let $\Omega$ be a domain in $\mathbb R^d$ of finite measure. Then
\begin{equation}\label{R2-lower}
     \liminf_{z\rightarrow+\infty}R_2(z)z^{-2-\frac{d}{2}}\geq\frac{8}{(d+2)(d+4)}\,(2\pi)^{-d}B_d|\Omega|.
  \end{equation}
  \end{thm}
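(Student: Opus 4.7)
The plan is to use the averaged variational principle of Lemma~\ref{AVP} to derive an asymptotically sharp pointwise lower bound on $R_1(z)=\sum_j(z-\sigma_j)_+$, and then to conclude via the identity $R_2(z)=2\int_0^z R_1(t)\,dt$. For the test functions in AVP I would take, for $\zeta\in\mathbb R^d\setminus\{0\}$,
$$
f_\zeta(x):=|\zeta|^{-1}\,\phi_h(x)\,e^{i\zeta\cdot x},
$$
where $\phi_h\in C_c^\infty(\Omega)$ is a smooth cutoff with $\phi_h\equiv 1$ on $\Omega\setminus\omega_h$ and $|\partial^\alpha\phi_h|\lesssim h^{-|\alpha|}$. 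The scaling $|\zeta|^{-1}$ is dictated by the inner product $(u,v)_{\mathcal H}=\int\nabla u\cdot\nabla v\,dx$ under which the buckling eigenfunctions are orthonormal: formally, with $\phi_h\equiv 1$, Plancherel gives $\int_{\mathbb R^d}|\langle v_j,f_\zeta\rangle_{\mathcal H}|^2\,d\zeta=(2\pi)^d$ for every $j$, which is exactly the normalization needed to read $R_1(z)$ off the left-hand side of AVP.

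With this choice, a direct computation using the product rule yields $\|f_\zeta\|_{\mathcal H}^2=\int\phi_h^2\,dx+|\zeta|^{-2}\int|\nabla\phi_h|^2\,dx$ and an analogous expansion for $Q(f_\zeta)=\int(\Delta f_\zeta)^2\,dx$. Taking $\mathfrak M_0=\{\zeta:\epsilon<|\zeta|<\sqrt z\}$ and integrating $z\|f_\zeta\|_{\mathcal H}^2-Q(f_\zeta)$ over $\mathfrak M_0$, the leading contribution to the right-hand side of the AVP inequality is $\int\phi_h^2\,dx\cdot\int_{\mathfrak M_0}(z-|\zeta|^2)\,d\zeta=\frac{2}{d+2}B_d|\Omega|\,z^{1+d/2}(1+o(1))$, using $\int\phi_h^2\to|\Omega|$ as $h\to 0$. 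The remainders involve $\int|\nabla\phi_h|^2\lesssim h^{\gamma-2}$ and $\int(\Delta\phi_h)^2\lesssim h^{\gamma-4}$ (by condition~\eqref{M}) and become $o(z^{1+d/2})$ for a suitable scaling $h=h(z)\to 0$.

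For the left-hand side, integration by parts gives
$$
\langle v_j, f_\zeta\rangle_{\mathcal H}=|\zeta|\,\widehat{\phi_h v_j}(\zeta)+|\zeta|^{-1} I_j(\zeta),
$$
where $I_j(\zeta)=\widehat{\nabla v_j\cdot\nabla\phi_h}(\zeta)+i\zeta\cdot\widehat{v_j\nabla\phi_h}(\zeta)$ packages the terms supported on $\omega_h$ through $\nabla\phi_h$. Plancherel then yields, after summing against $(z-\sigma_j)_+$ and integrating in $\zeta$,
$$
\sum_j(z-\sigma_j)_+\int|\langle v_j,f_\zeta\rangle|^2\,d\zeta\leq(1+\eta)(2\pi)^d R_1(z)+\text{(tube errors)},
$$
where the tube errors are controlled by $S_1(z,h):=\sum_j(z-\sigma_j)_+\int_{\omega_h}v_j^2\,dx$ and $S_2(z,h):=\sum_j(z-\sigma_j)_+\int_{\omega_h}|\nabla v_j|^2\,dx$. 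The main technical obstacle is showing that $h^{-2}S_1$ and $S_2$ are of lower order than $z^{1+d/2}$ with an admissible choice of $h=h(z)$. Since condition~\eqref{M} is weaker than Lipschitz, ordinary Hardy-type inequalities are not available; instead I would expand each $v_j$ in the Dirichlet bilaplacian basis $\{U_k\}$, interchange summation, and recognize $S_1, S_2$ as quantities governed by the bilaplacian spectral projector restricted to $\omega_h$. The Weyl law for Dirichlet bilaplacian Riesz means (Theorem~\ref{weyl_dir_bil}), combined with the volume bound $|\omega_h|\lesssim h^\gamma$, then yields the required $o(z^{1+d/2})$ estimate. This trade-off is also responsible for the $\liminf$ (rather than pointwise) formulation of the theorem.

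Combining the previous three paragraphs, AVP gives
$$
R_1(z)\geq \frac{2}{d+2}(2\pi)^{-d} B_d|\Omega|\,z^{1+d/2}(1+o(1)),
$$
and hence, by a standard integration argument applied to $R_2(z)=2\int_0^z R_1(t)\,dt$,
$$
\liminf_{z\to+\infty} R_2(z)\,z^{-2-d/2}\geq \frac{4}{d+4}\cdot\frac{2}{d+2}(2\pi)^{-d}B_d|\Omega|=\frac{8}{(d+2)(d+4)}(2\pi)^{-d}B_d|\Omega|,
$$
which is the desired inequality.
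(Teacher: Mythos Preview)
Your approach---using oscillatory trial functions $f_\zeta=|\zeta|^{-1}\phi_h e^{i\zeta\cdot x}$ to bound $R_1(z)$ directly---differs substantially from the paper's, and it has a genuine gap precisely at the step you flag as the ``main technical obstacle.''

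The difficulty is structural. For the Dirichlet Laplacian, AVP with trial functions $\phi_h e^{i\zeta\cdot x}$ gives, after Plancherel, the factor $\|\phi_h u_j\|_2^2\le\|\phi_h\|_\infty^2$ on the left, \emph{uniformly in $j$}; all cutoff errors sit on the right and are controlled by $|\omega_h|$. In the buckling problem the Hilbert-space inner product is $\int\nabla\cdot\nabla$, and your computation places $\|\nabla(\phi_h v_j)\|_2^2$ (plus the $|\zeta|^{-2}|I_j|^2$ contribution) on the left. These are genuinely $j$-dependent through $\int_{\omega_h}v_j^2$ and $\int_{\omega_h}|\nabla v_j|^2$. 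Your proposed remedy---expand $v_j=\sum_k a_{jk}U_k$ and recognise $S_1,S_2$ as restricted bilaplacian spectral projectors---does not go through: the $v_j$ are \emph{not} $L^2$-orthonormal (only $\{\nabla v_j\}$ is an orthonormal system in $(L^2(\Omega))^d$), so after interchanging sums one obtains ${\rm tr}(MP_h)$ with $M_{kl}=\sum_j(z-\sigma_j)_+a_{jk}a_{jl}$, which has no useful relation to the bilaplacian projector. Neither $\|M\|_{\rm op}$ nor ${\rm tr}(M)$ can be bounded independently of $R_1(z)$ itself, so the argument is circular. (As a separate obstruction, the term $\int|\zeta|^{-2}|\widehat{\nabla v_j\cdot\nabla\phi_h}|^2\,d\zeta$ is an $\dot H^{-1}$ norm of a function whose mean need not vanish; for $d\le 2$ it is not even finite.)

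The paper avoids this entirely by taking $\mathfrak M=\mathbb N$ and $f_i=U_i$, the bilaplacian Dirichlet eigenfunctions. Because $\{U_i\}$ is a \emph{complete} $L^2$-basis, Parseval gives
\[
\sum_i\Big|\int_\Omega\nabla v_j\cdot\nabla U_i\,dx\Big|^2=\sum_i\Big|\int_\Omega(\Delta v_j)\,U_i\,dx\Big|^2=\|\Delta v_j\|_2^2=\sigma_j
\]
exactly, with no cutoff and no tube error. The AVP inequality then reads $zR_1(z)-R_2(z)\ge\sum_{i\in I}\bigl(z\|\nabla U_i\|_2^2-\Lambda_i\bigr)$. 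Choosing $I=\{i:\Lambda_i\le z^2\}$, bounding $\sum_i(z-\|\nabla U_i\|_2^2)_+$ by Berezin--Li--Yau (Theorem~\ref{berezin}), and noting $zR_1-R_2=z^3\frac{d}{dz}\bigl(R_2(z)/(2z^2)\bigr)$, one obtains a differential inequality whose right-hand side is governed by $\sum_i(z^2-\Lambda_i)_+$. Feeding in Weyl's law for the bilaplacian (Theorem~\ref{weyl_dir_bil}) and integrating from $z_\epsilon$ to $z$ produces \eqref{R2-lower}. It is this integration of a differential inequality---not tube-error control---that is responsible for the $\liminf$ formulation.
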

\begin{proof}
 We apply Lemma \ref{AVP} for the buckling problem with test functions $U_j$ being the eigenfunctions of the biharmonic Dirichlet operator with eigenvalues $\Lambda_j$. Namely, we take into \eqref{RieszVersion} $\|f\|^2=\|\nabla f\|_2^2$, $Q(f,f)=\|\Delta f\|_2^2$, $\omega_j=\sigma_j$, $\psi^{(j)}=v_j$, $\mathfrak M=\mathbb N$, $\mathfrak M_0=I\subseteq \mathbb N$ and $f_{\zeta}=U_j$, $j\in\mathbb N$.
 
Let $z>0$. Then we have
\begin{equation*}
\sum_j(z-\sigma_j)_+\sum_{i=1}^{\infty}\left|\int_{\Omega}\nabla v_j\cdot\nabla U_i dx\right|^2\geq\sum_{i\in I}\left(z\|\nabla U_i\|_2^2-\Lambda_i\right).
\end{equation*}
Now, integrating by parts yields
\begin{equation*}
\sum_{i=1}^{\infty}\left|\int_{\Omega}\nabla v_j\cdot\nabla U_i dx\right|^2=\sum_{i=1}^{\infty}\left|\int_{\Omega}\Delta v_j U_i dx\right|^2=\|\Delta v_j\|_2^2=\sigma_j,
\end{equation*}
since $\left\{U_i\right\}_i$ is an orthonormal basis of $L^2(\Omega)$.  We have proved that
\begin{equation}\label{Buckling-avp-3}
 \sum_{j}(z-\sigma_j)_{+}\sigma_j\geq \sum_{i\in I}z\int_{\Omega}|\nabla U_i|^2 \,dx-\Lambda_i,
\end{equation}
for some index set $I$ to be specified later. We rewrite \eqref{Buckling-avp-3}  as
\begin{equation}\label{Buckling-avp-4}
 z\,R_1(z)-R_2(z)\geq \sum_{i\in I}(z^2-\Lambda_i)-z(z-\int_{\Omega}|\nabla U_i|^2 \,dx).
\end{equation}
Moreover,
\begin{equation}\label{Buckling-avp-4-part1}
  \sum_{i\in I}z(z-\int_{\Omega}|\nabla U_i|^2 \,dx)\leq \sum_{i}z(z-\int_{\Omega}|\nabla U_i|^2 \,dx)_{+}.
\end{equation}
On the other hand, by the Berezin-Li-Yau inequality for $L^2$-orthonormal functions in $H^1_0(\Omega)$ (Theorem \ref{berezin}) we have
\begin{equation}\label{Buckling-avp-4-part2}
 \sum_{i}(z-\int_{\Omega}|\nabla U_i|^2 \,dx)_{+}\leq \frac{2}{d+2}\,(2\pi)^{-d}B_d|\Omega|z^{1+\frac{d}{2}}.
\end{equation}
We choose $I=\{i: \Lambda_i\leq z^2\}$. Since $\displaystyle \frac{d}{dz}\,R_2(z)=2R_1(z)$, from \eqref{Buckling-avp-4}, \eqref{Buckling-avp-4-part1}, and \eqref{Buckling-avp-4-part2} we get the following differential inequality for $R_2(z)$, $z>0$:
\begin{equation}\label{Buckling-avp-diff-ineq-1}
 z^3\frac{d}{dz}\bigg(\frac{R_2(z)}{2z^2}\bigg)\geq \sum_{i}(z^2-\Lambda_i)_{+}-\frac{2}{d+2}\,(2\pi)^{-d}B_d|\Omega|z^{2+\frac{d}{2}}.
\end{equation}
Weyl's law for Dirichlet Bilaplacian eigenvalues (Theorem \ref{weyl_dir_bil}) states that
\begin{equation*}
    \lim_{z\rightarrow+\infty}z^{-2-\frac{d}{2}}\sum_{i}(z^2-\Lambda_i)_{+}=\frac{4}{d+4}\,(2\pi)^{-d}B_d|\Omega|.
 \end{equation*}
As a consequence, for any $\epsilon >0$ there is $z_{\epsilon}>0$ such that for all $z\geq z_{\epsilon}$
\begin{equation*}
 z^{-2-\frac{d}{2}}\sum_{i}(z^2-\Lambda_i)_{+}\geq \frac{4}{d+4}\,(2\pi)^{-d}B_d|\Omega|-\epsilon.
\end{equation*}
Inserting this lower bound into \eqref{Buckling-avp-diff-ineq-1} we obtain
\begin{equation*}
 z^3\frac{d}{dz}\bigg(\frac{R_2(z)}{2z^2}\bigg)\geq\frac{2d}{(d+2)(d+4)}\,(2\pi)^{-d}B_d|\Omega|z^{2+\frac{d}{2}}-\epsilon z^{2+\frac{d}{2}},
\end{equation*}
for all $z\geq z_{\epsilon}$. Integrating \eqref{Buckling-avp-diff-ineq-1} in $[z_{\epsilon},z]$ yields the inequality
\begin{equation*}
\frac{R_2(z)}{2z^2}-\frac{R_2(z_{\epsilon})}{2z_{\epsilon}^2}\geq \bigg(\frac{4}{(d+2)(d+4)}\,(2\pi)^{-d}B_d|\Omega|-\frac{2\epsilon}{d}\bigg)\big(z^{\frac{d}{2}}-z_{\epsilon}^{\frac{d}{2}}\big),
\end{equation*}
from which we deduce \eqref{R2-lower}.
\end{proof}

Following the same lines of the proof of Theorem \ref{R2-lower-thm} we can deduce a number of other bounds for functions related to $R_2(z)$.

\begin{cor}\label{buckling_dirichlet}
Let $\Omega$ be a domain in $\mathbb R^d$ of finite measure. Then for any $z>0$ and $k\in\mathbb N$
\begin{equation}\label{RieszD}
\sum_j(z-\sigma_j)_+\sigma_j\geq\sum_{j=1}^k(z\lambda_j-\Lambda_j).
\end{equation}
Moreover, for any $k\in\mathbb N$
\begin{equation}\label{AVP_D}
\frac{1}{k}\sum_{j=1}^k(\sigma_j^2-\Lambda_j)\leq\sigma_{k+1}\left(\frac{1}{k}\sum_{j=1}^k\sigma_j-\lambda_j\right).
\end{equation}
\end{cor}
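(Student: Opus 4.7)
The plan is to obtain both inequalities as direct specializations of the averaged variational principle already set up in the proof of Theorem \ref{R2-lower-thm}. For \eqref{RieszD}, I would repeat the opening step of that proof verbatim, but take $\mathfrak M_0 = \{1,\ldots,k\}$ in place of the index set $\{i : \Lambda_i \leq z^2\}$. Concretely, apply Lemma \ref{AVP} with the buckling data $\|f\|^2 = \|\nabla f\|_2^2$, $Q(f,f) = \|\Delta f\|_2^2$, $\omega_j = \sigma_j$, $\psi^{(j)} = v_j$, $\mathfrak M = \mathbb N$ with counting measure, and $f_i = U_i$ the Dirichlet Bilaplacian eigenfunctions. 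The same integration-by-parts identity used in Theorem \ref{R2-lower-thm}, which exploits that $\{U_i\}_i$ is an $L^2$-basis, collapses the left-hand side to $\sum_j (z-\sigma_j)_+ \sigma_j$ and produces
$$
\sum_j (z-\sigma_j)_+ \sigma_j \;\geq\; \sum_{i=1}^k \bigl(z\|\nabla U_i\|_2^2 - \Lambda_i\bigr).
$$

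To convert $\|\nabla U_i\|_2^2$ into $\lambda_i$, I would invoke the Ky Fan trace form of the min-max principle: since $\{U_i\}_{i=1}^k$ is $L^2$-orthonormal in $H^1_0(\Omega)$, expanding each $U_i$ in the Dirichlet Laplacian eigenbasis and using \eqref{minmax-dirichlet} gives $\sum_{i=1}^k \|\nabla U_i\|_2^2 \geq \sum_{i=1}^k \lambda_i$. Since $z > 0$, inserting this into the previous display yields \eqref{RieszD}.

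For \eqref{AVP_D}, I would simply specialize \eqref{RieszD} to $z = \sigma_{k+1}$. Monotonicity of the $\sigma_j$ makes $(z-\sigma_j)_+$ vanish for $j \geq k+1$ and equal to $\sigma_{k+1} - \sigma_j$ for $j \leq k$, so the Riesz-type sum collapses to
$$
\sum_j (\sigma_{k+1} - \sigma_j)_+ \sigma_j \;=\; \sigma_{k+1}\sum_{j=1}^k \sigma_j - \sum_{j=1}^k \sigma_j^2.
$$
Rearranging the resulting inequality and dividing by $k$ then produces \eqref{AVP_D}, understood as $\tfrac{1}{k}\sum_{j=1}^k(\sigma_j^2 - \Lambda_j) \leq \sigma_{k+1}\cdot\tfrac{1}{k}\sum_{j=1}^k(\sigma_j - \lambda_j)$. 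I do not anticipate any substantial obstacle: once the AVP identity from Theorem \ref{R2-lower-thm} is in hand, \eqref{RieszD} is just a different choice of the index set $\mathfrak M_0$, and \eqref{AVP_D} is purely algebraic. The only ingredient beyond the excerpt is the Ky Fan sum inequality for the Dirichlet Laplacian tested against an $L^2$-orthonormal family in $H^1_0(\Omega)$, which is a standard consequence of \eqref{minmax-dirichlet} and in fact is already used in essentially this form in Theorem \ref{berezin}.
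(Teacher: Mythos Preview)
Your proposal is correct and follows essentially the same route as the paper: apply the AVP identity \eqref{Buckling-avp-3} with $I=\{1,\dots,k\}$, replace $\sum_{i=1}^k\|\nabla U_i\|_2^2$ by $\sum_{i=1}^k\lambda_i$, and then set $z=\sigma_{k+1}$. The only cosmetic difference is that the paper derives the trace inequality $\sum_{i=1}^k\lambda_i\le\sum_{i=1}^k\|\nabla U_i\|_2^2$ by a second application of Lemma~\ref{AVP} (now to the Dirichlet Laplacian with test functions $U_i$), rather than citing Ky Fan; your reference to Theorem~\ref{berezin} is slightly off target, but the inequality you need is exactly what that second AVP application yields.
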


\begin{proof}
We note that
\begin{equation}\label{AVP_buckling_3}
\sum_{j=1}^k\lambda_j\leq\sum_{j=1}^k\int_{\Omega}|\nabla U_j|^2dx,
\end{equation}
where $\lambda_j$ are the eigenvalues of the Dirichlet Laplacian and $U_j$ the eigenfunctions of the Dirichlet Bilaplacian. Inequality \eqref{AVP_buckling_3} follows by applying \eqref{RieszVersion} to the Dirichlet Laplacian, that is, taking into \eqref{RieszVersion} $\|f\|^2=\| f\|_2^2$, $Q(f,f)=\|\nabla f\|_2^2$, $\omega_j=\lambda_j$, $\psi^{(j)}=u_j$, $\mathfrak M=\mathbb N$, $\mathfrak M_0=I\subseteq \mathbb N$ and $f_{\zeta}=U_j$, $j\in\mathbb N$. Doing so we obtain
\begin{equation*}
\sum_j(z-\lambda_j)_+\sum_{i=1}^{\infty}\left|\int_{\Omega}u_j U_i dx\right|^2\geq \sum_{i\in I}\left(z-\|\nabla U_i\|_2^2\right)
\end{equation*}
which is equivalent to
\begin{equation*}
\sum_j(z-\lambda_j)_+\geq\sum_{i\in I}\left(z-\|\nabla U_i\|_2^2\right).
\end{equation*}
Choosing $I=\left\{1,...,k\right\}$ we obtain \eqref{AVP_buckling_3}. As a consequence, plugging  \eqref{AVP_buckling_3} in \eqref{Buckling-avp-3} we obtain \eqref{RieszD}. Now, taking $z=\sigma_{k+1}$ in \eqref{RieszD} with $I=\left\{1,...,k\right\}$ yields \eqref{AVP_D}.
\end{proof}

\begin{rem}

We note that \eqref{RieszD} implies for all $z>0$
$$
\frac{d}{dz}\sum_j\frac{(z-\sigma_j)_+^2}{z^2}=\frac{2}{z^3}\sum_j(z-\sigma_j)_+\sigma_j\geq\frac{2}{z^3}\sum_j(z\lambda_j-\Lambda_j)_+,
$$
and integrating this inequality between $0$ and $z$ we obtain
$$
\sum_j(z-\sigma_j)_+^2\geq\sum_j\frac{\lambda_j^2}{\Lambda_j}\left(z-\frac{\Lambda_j}{\lambda_j}\right)_+^2.
$$
\end{rem}

Another bound can be obtained by using the ``averaged variational principle'' \eqref{AVP} with test functions of the form $(2\pi)^{-\frac{d}{2}}e^{i p \cdot x}\phi(x)$ with $\phi\in H^2_0(\Omega)\cap L^{\infty}(\Omega)$ real valued. 
\begin{thm}\label{buckling_AVP_1}
Let $\Omega$ be a domain in $\mathbb R^d$ of finite measure. For any $\phi\in H^2_0(\Omega)\cap L^{\infty}(\Omega)$ and $z>0$,
\begin{multline*}
\|\phi\|_{\infty}^2\sum_j(z-\sigma_j)_+\sigma_j\\
\geq\frac{2d B_d\|\phi\|_2^2}{(d+2)(d+4)}(2\pi)^{-d}z^{2+\frac{d}{2}}-B_d(2\pi)^{-d}\|\nabla\phi\|_2^2z^{1+\frac{d}{2}}-B_d(2\pi)^{-d}\|\Delta\phi\|_2^2z^{\frac{d}{2}}.
\end{multline*}
\end{thm}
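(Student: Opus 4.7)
The strategy is to apply the averaged variational principle (Lemma \ref{AVP}) to the buckling problem in the same spirit used in \cite{HaSt14} and in Theorem \ref{R2-lower-thm}, but with a family of modulated cut-offs
\[
 f_p(x):=(2\pi)^{-d/2}e^{ip\cdot x}\phi(x),\qquad p\in\mathbb R^d,
\]
playing the role of the test functions, and with $\mathfrak M=\mathbb R^d$ equipped with Lebesgue measure and $\mathfrak M_0=\{p\in\mathbb R^d:|p|<\sqrt z\}$. In the buckling setting $\|f\|^2=\|\nabla f\|_2^2$, $Q(f,f)=\|\Delta f\|_2^2$, $\omega_j=\sigma_j$ and $\psi^{(j)}=v_j$, so \eqref{RieszVersion} becomes
\[
 \sum_j(z-\sigma_j)_+\int_{\mathbb R^d}\Bigl|\int_\Omega \nabla v_j\cdot\overline{\nabla f_p}\,dx\Bigr|^2 dp\;\geq\;\int_{B(0,\sqrt z)}\bigl(z\|\nabla f_p\|_2^2-\|\Delta f_p\|_2^2\bigr)\,dp.
\]

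To bound the left-hand side from above in terms of $\sum_j(z-\sigma_j)_+\sigma_j$, I would integrate by parts using $v_j\in H^2_0(\Omega)$ to write $\int_\Omega\nabla v_j\cdot\overline{\nabla f_p}\,dx=-\int_\Omega\Delta v_j\,\overline{f_p}\,dx$. Up to a constant this is the Fourier transform of $\Delta v_j\,\phi$ (extended by zero outside $\Omega$) evaluated at $p$; applying Plancherel over the whole $\mathbb R^d$ gives
\[
 \int_{\mathbb R^d}\Bigl|\int_\Omega\nabla v_j\cdot\overline{\nabla f_p}\,dx\Bigr|^2 dp\;=\;\int_\Omega(\Delta v_j)^2\phi^2\,dx\;\leq\;\|\phi\|_\infty^2\,\sigma_j,
\]
where I used the buckling normalization $\|\Delta v_j\|_2^2=\sigma_j$. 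Plugging this into \eqref{RieszVersion} and enlarging the $p$-integral from $B(0,\sqrt z)$ to $\mathbb R^d$ on the left yields precisely $\|\phi\|_\infty^2\sum_j(z-\sigma_j)_+\sigma_j$ as the desired upper bound of the left-hand side.

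The right-hand side is then a direct computation. Using that $\phi$ is real-valued and belongs to $H^2_0(\Omega)$ (so that $\int_\Omega\phi\Delta\phi\,dx=-\|\nabla\phi\|_2^2$), a straightforward expansion gives
\begin{align*}
 \|\nabla f_p\|_2^2&=(2\pi)^{-d}\bigl(|p|^2\|\phi\|_2^2+\|\nabla\phi\|_2^2\bigr),\\
 \|\Delta f_p\|_2^2&=(2\pi)^{-d}\Bigl(|p|^4\|\phi\|_2^2+2|p|^2\|\nabla\phi\|_2^2+\|\Delta\phi\|_2^2+4\int_\Omega(p\cdot\nabla\phi)^2\,dx\Bigr).
\end{align*}
Integrating $z\|\nabla f_p\|_2^2-\|\Delta f_p\|_2^2$ in $p$ over $B(0,\sqrt z)$ using the standard identities $\int_{B(0,\sqrt z)}|p|^{2k}dp=\tfrac{d}{d+2k}B_d z^{d/2+k}$ and $\int_{B(0,\sqrt z)}p_ip_j\,dp=\tfrac{\delta_{ij}}{d+2}B_d z^{1+d/2}$, the coefficient of $\|\phi\|_2^2$ produces $\tfrac{2d}{(d+2)(d+4)}B_dz^{2+d/2}$, the two $\|\nabla\phi\|_2^2$ contributions collapse via $\tfrac{2-d}{d+2}-\tfrac{4}{d+2}=-1$, and the $\|\Delta\phi\|_2^2$ term contributes $-B_dz^{d/2}$ at lowest order, giving exactly the stated inequality.

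The computation is essentially mechanical; the only subtle point is the interplay between the integration by parts (which requires $v_j\in H^2_0$) and the Plancherel bound (which requires $\phi\in L^\infty$ to ensure $\Delta v_j\,\phi\in L^2(\mathbb R^d)$ with norm controlled by $\|\phi\|_\infty\sqrt{\sigma_j}$). Choosing the cut-off radius $\sqrt z$ is precisely what matches the quartic $-|p|^4$ penalty against the quadratic gain $z|p|^2$ and produces the leading order with the correct Weyl constant.
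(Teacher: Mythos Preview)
Your approach is correct and is exactly the method the paper indicates (the paper only sketches the proof, pointing to the analogous computations in \cite{hps} and \cite{bps} with the substitutions $\mathfrak M=\mathbb R^d$, $\mathfrak M_0=B(0,z^{1/2})$, $f_\zeta=(2\pi)^{-d/2}e^{i\zeta\cdot x}\phi$). Two minor remarks: in \eqref{RieszVersion} the left-hand integral is already taken over all of $\mathfrak M=\mathbb R^d$, so no ``enlarging'' step is needed; and your computation in fact yields $-\|\Delta\phi\|_2^2\,B_d(2\pi)^{-d}z^{d/2}$ for the last term, which is the correct output of the calculation---the missing factor $\|\Delta\phi\|_2^2$ in the stated inequality appears to be a typographical slip in the paper.
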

\begin{proof}
The proof can be carried out in the very same way as that of Corollary \ref{buckling_dirichlet}. In this case we  take into \eqref{RieszVersion} $\|f\|^2=\|\nabla f\|_2^2$, $Q(f,f)=\|\Delta f\|_2^2$, $\omega_j=\sigma_j$, $\psi^{(j)}=v_j$, $\mathfrak M=\mathbb R^d$, $\mathfrak M_0=B(0,z^{\frac{1}{2}})$ and $f_{\zeta}=(2\pi)^{-\frac{d}{2}}e^{i\zeta\cdot x}\phi$, $\zeta\in\mathbb R^d$. Here $B(0,z^{\frac{1}{2}})$ denotes the ball of radius $z^{\frac{1}{2}}$ in $\mathbb R^d$. For more details we refer to  \cite[Theorem 2.1]{hps} in the case of the Dirichlet Laplacian and to \cite[Theorem 4.1]{bps} in the case of the Dirichlet Bilaplacian.
\end{proof}
\begin{rem}
We note that in Theorem \ref{buckling_AVP_1} we have a certain freedom for the choice of $\phi\in H^2_0(\Omega)$, at least if $\Omega$ is bounded and smooth enough. In fact, for a given $h>0$ sufficiently small  it is always possible to find a non-negative function $\phi\in H^2_0(\Omega)$ with $\|\phi\|_{\infty}\leq 1$, $|\nabla\phi|\leq c_d h^{-1}$, $|\Delta\phi|\leq c_dh^{-2}$, where $c_d$ depends only on $d$, and with $|\Omega|-\|\phi\|_2^2\sim |\partial\Omega|h$ as $h\rightarrow 0^+$. In particular, such $\phi$ equals $1$ for all points of $\Omega$ at distance from $\partial\Omega$ bigger than $h$. We refer to \cite[Lemma 4.4]{bps} for more details. In particular, we see that choosing $h=z^{-\frac{1}{2}}$ leads to asymptotically sharp lower bounds with lower order terms of the correct order.
\end{rem}


\subsection{The Berezin-Li-Yau method for the buckling problem}

Here we reconsider, in a simplified presentation, the Berezin-Li-Yau method employed in \cite{LePr}, showing a computational inaccuracy present in the proof. In particular, we show that the given sharp eigenvalue bound does not follow from the estimates presented in \cite{LePr}. Then we provide a proof of the sharp Berezin-Li-Yau estimate for the average of the first $k$ buckling eigenvalues. We then observe that an upper bound on the average is  equivalent to a suitable lower bound on $R_1(z)$.

We start by observing that, for any $u\in H^2_0(\Omega)$,
\begin{equation*}
  \int_{\Omega}|\nabla u(x)|^2\,dx=\int_{\mathbb{R}^d}|p|^2|\hat{u}(p)|^2\,dp,\quad \int_{\Omega}|\Delta u(x)|^2\,dx=\int_{\mathbb{R}^d}|p|^4|\hat{u}(p)|^2\,dp,
\end{equation*}
where $\hat u(p)=(2\pi)^{-\frac{d}{2}}\int_{\mathbb R^d}u(x)e^{ip\cdot x}dx$ denotes the Fourier transform of the function $u$ extended by $0$ to $\mathbb R^d$. In particular, for the buckling eigenfunctions $v_j$
\begin{equation*}
  \int_{\mathbb{R}^d}|p|^2|\hat{v}_j(p)|^2\,dp=1,\quad \int_{\mathbb{R}^d}|p|^4|\hat{v}_j(p)|^2\,dp=\sigma_j.
\end{equation*}
Therefore, for any $R>0$ and $k\in\mathbb N$,
\begin{equation}\label{Buckling-problem-Li-Yau-basic}
 \begin{split}
    \sum_{j=1}^{k}\sigma_j & =  kR^2+ \int_{\mathbb{R}^d}(|p|^4-R^2|p|^2)\sum_{j=1}^{k}|\hat{v}_j(p)|^2\,dp\\
      & \geq kR^2+ \int_{B(0,R)}(|p|^4-R^2|p|^2)\sum_{j=1}^{k}|\hat{v}_j(p)|^2\,dp.\\
 \end{split}
\end{equation}
In order to produce a lower bound on the sum of the $\sigma_j$'s, we need an upper bound on $\sum_{j=1}^{k}|\hat{v}_j(p)|^2$. For the reader's convenience, we show first the bound derived in \cite{LePr}. Let $f_k(p)=\sum_{j=1}^{k}|\hat{v}_j(p)|^2$, and $e_{\alpha}$, $\alpha=1,\ldots, d$ be the canonical basis vectors in $\mathbb{R}^d$. Any $p\in\mathbb R^d$ has coordinates $(p_1,...,p_d)$. Then
\begin{equation}\label{Levine-Protter-FT-upper-bound}
\begin{split}
  |p|^2 f_k(p)&=\sum_{j=1}^{k}\sum_{\alpha=1}^{d}|p_{\alpha}\hat{v}_j(p)|^2\\
  &=(2\pi)^{-d}\sum_{j=1}^{k}\sum_{\alpha=1}^{d}\bigg|\int_{\Omega} v_j(x)(-ie_{\alpha}\cdot\nabla e^{ipx}) \,dx\bigg|^2\\
  &=(2\pi)^{-d}\sum_{\alpha=1}^{d}\sum_{j=1}^{k}\bigg|\int_{\Omega} \nabla v_j(x)\cdot e_{\alpha} e^{ipx} \,dx\bigg|^2\\
  &\leq d(2\pi)^{-d}|\Omega|,
\end{split}
\end{equation}
where the final inequality follows from \eqref{buckling-eigenfct-Bessel-ineq-vectors} applied for each $\alpha$. This corresponds to \cite[Inequality (4.6)]{LePr}. With this estimate however one does not get the sharp lower bound (by optimizing \eqref{Buckling-problem-Li-Yau-basic} with respect to $R$ and using \eqref{Levine-Protter-FT-upper-bound}), which should read
\begin{equation}\label{Buckling-ev-sum-sharp-lower-bound}
  \frac{d+2}{d}\frac{1}{k}\sum_{j=1}^{k}\sigma_j\geq C_d \left(\frac{k}{|\Omega|}\right)^{\frac 2 d},
\end{equation}
with $C_d=\frac{4\pi^2}{B_d^{\frac{2}{d}}}$, as claimed by \cite[inequality (4.8)]{LePr}, but only
\begin{equation*}
  \frac{d+2}{d}\frac{1}{k}\sum_{j=1}^{k}\sigma_j\geq d^{-\frac2 d} C_d \left(\frac{k}{|\Omega|}\right)^{\frac2 d}.
\end{equation*}
Nevertheless, the upper bound \eqref{Levine-Protter-FT-upper-bound} is easily improved in the following way
\begin{equation}\label{sharp-FT-upper-bound}
\begin{split}
  |p|^4 f_k(p)&=\sum_{j=1}^{k}||p|^2\hat{v}_j(p)|^2\\
  &=(2\pi)^{-d}\sum_{j=1}^{k}\bigg|\int_{\Omega} v_j(x)|p|^2 e^{ipx} \,dx\bigg|^2\\
  &=(2\pi)^{-d}\sum_{j=1}^{k}\bigg|\int_{\Omega} v_j(x)(-\Delta e^{ipx})\,dx\bigg|^2\\
  &=(2\pi)^{-d}\sum_{j=1}^{k}\bigg|\int_{\Omega} \nabla v_j(x) \cdot\nabla e^{ipx}\,dx\bigg|^2\\
  &\leq(2\pi)^{-d}\int_{\Omega}|\nabla e^{ipx}|^2\,dx\\
  &=(2\pi)^{-d}|\Omega||p|^2,
\end{split}
\end{equation}
from which the sharp lower bound \eqref{Buckling-ev-sum-sharp-lower-bound} follows immediately.

\begin{rem}
From \eqref{sharp-FT-upper-bound} it is possible to deduce sharp lower bounds for sums of eigenvalues of higher order buckling problems  $(-\Delta u)^m=-\sigma\Delta u$.

\end{rem}

The sharp lower bound \eqref{Buckling-ev-sum-sharp-lower-bound} is equivalent to a sharp upper bound for $R_1(z)$. Namely, we have the following.

\begin{thm}\label{R1-upper-thm}
Let $\Omega$ be a domain in $\mathbb R^d$ of finite measure. Then
\begin{equation}\label{Buckling-ev-R-1-sharp-upper-bound}
    R_1(z)\leq \frac{2}{d+2}\,(2\pi)^{-d}B_d|\Omega|z^{1+\frac{d}{2}}.
  \end{equation}
\end{thm}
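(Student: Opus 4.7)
The plan is to combine the sharp Fourier-transform estimate \eqref{sharp-FT-upper-bound} with the basic identity \eqref{Buckling-problem-Li-Yau-basic} to produce a one-parameter family of linear inequalities for the partial sums of the $\sigma_j$, and then to deduce \eqref{Buckling-ev-R-1-sharp-upper-bound} by a trivial duality argument.

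First, I would rewrite \eqref{sharp-FT-upper-bound} in the form $|p|^2 f_k(p)\leq (2\pi)^{-d}|\Omega|$ and use that $|p|^4-R^2|p|^2\leq 0$ on $B(0,R)$ to get the pointwise bound
$$(|p|^4-R^2|p|^2)f_k(p)\;\geq\;(|p|^2-R^2)\,(2\pi)^{-d}|\Omega|\qquad \text{on }B(0,R).$$
Substituting this into \eqref{Buckling-problem-Li-Yau-basic} and computing the elementary radial integrals $\int_{B(0,R)}|p|^2\,dp=\tfrac{d}{d+2}B_d R^{d+2}$ and $R^2\,|B(0,R)|=B_d R^{d+2}$ yields, for every $k\in\mathbb{N}$ and every $R>0$,
$$kR^2-\sum_{j=1}^k\sigma_j\;\leq\;\frac{2}{d+2}(2\pi)^{-d}B_d|\Omega|R^{d+2}.$$

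Next, setting $R^2=z$ and rearranging gives
$$\sum_{j=1}^k(z-\sigma_j)\;\leq\;\frac{2}{d+2}(2\pi)^{-d}B_d|\Omega|z^{1+\frac{d}{2}}\qquad\forall\,k\in\mathbb{N},\;z>0.$$
To finish, I would choose $k^{*}=\#\{j:\sigma_j<z\}$ (with the convention $k^{*}=0$ when $\sigma_1\geq z$), so that $\sum_{j=1}^{k^{*}}(z-\sigma_j)=R_1(z)$; then \eqref{Buckling-ev-R-1-sharp-upper-bound} follows at once. Equivalently, taking the supremum over $k$ on the left-hand side produces $R_1(z)$, since any additional index contributes a non-positive term to the partial sum.

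I do not expect any substantive obstacle here: the conceptual point is simply that the sharp Berezin--Li--Yau type lower bound \eqref{Buckling-ev-sum-sharp-lower-bound} on the partial sums $\sum_{j=1}^k\sigma_j$ and the sharp upper bound on $R_1(z)$ are Legendre duals of one another, so that once the corrected Fourier bound \eqref{sharp-FT-upper-bound} is in hand the upper bound on $R_1(z)$ follows immediately, with no further optimization in $R$ required.
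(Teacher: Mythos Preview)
Your proof is correct and rests on the same two ingredients as the paper's: the identity \eqref{Buckling-problem-Li-Yau-basic} and the sharp Fourier bound \eqref{sharp-FT-upper-bound}. The only difference is in the final deduction: the paper first optimizes in $R$ to obtain \eqref{Buckling-ev-sum-sharp-lower-bound} and then passes to \eqref{Buckling-ev-R-1-sharp-upper-bound} via the Legendre transform, whereas you bypass both steps by substituting $R^2=z$ and $k=N(z)$ directly---this is a slight streamlining, but it is exactly the Legendre duality carried out by hand, as you yourself note.
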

\begin{proof}
For $w\geq 0$, let $$
\mathcal L[f](w):=\sup_{z\geq 0}(z w-f(z))
$$
denote the Legendre transform of a convex, non-negative function $f$ defined on $[0,+\infty)$. We recall that, for $z,w\geq 0$
$$
f(z)\leq g(z)\iff\mathcal L[f](w)\geq \mathcal L[g](w)
$$
Setting $f(z)=R_1(z)$ and $g(z)=\frac{2}{d+2}\,(2\pi)^{-d}B_d|\Omega|z^{1+\frac{d}{2}}$, we have
$$
\mathcal L[f](w)=(w-[w])\sigma_{[w]+1}+\sum_{j=1}^{[w]}\sigma_j,
$$
and
$$
\mathcal L[g](w)=\frac{d}{d+2}C_d w\left(\frac{w}{|\Omega|}\right)^{\frac{2}{d}}.
$$
When $w\in\mathbb N$, then $\mathcal L[f](w)\geq \mathcal L[g](w)$ is exactly \eqref{Buckling-ev-sum-sharp-lower-bound}. Since $\mathcal L[f](w),\mathcal L[g](w)$ are convex, the inequality holds for all $w\geq 0$. This implies that $f(z)\leq g(z)$ for all $z\geq 0$. For more discussions on the equivalence of bounds on Riesz means and averages we refer to \cite{laptev_weidl}.
\end{proof}

\begin{rem}
We note that \eqref{Buckling-ev-R-1-sharp-upper-bound} corresponds to the classical Berezin-Li-Yau lower bound when we consider $R_1(z)$ to be the first Riesz mean for Dirichlet Laplacian eigenvalues. Its validity for buckling eigenvalues can be deduced alternatively from the fact that
$$
\lambda_j\leq\Lambda_j^{\frac{1}{2}}\leq\sigma_j.
$$
We note that this chain of inequalities has been proved in \cite{liu} for domains of class $C^2$ with strict inequalities. However it is not difficult to prove its validity under minimal assumptions on $\Omega$, that is, $\Omega$ of finite measure. In fact we just need that the existence of the discrete spectrum and its variational characterization are ensured. For example, in order to prove $\lambda_j\leq\sigma_j$ we just note that for any $0\ne u\in H^2_0(\Omega)$, from Cauchy-Schwarz inequality we have
$$
\left(\int_{\Omega}|\nabla u|^2dx\right)^2\le\left(\int_{\Omega} u^2dx\right)\left(\int_{\Omega} (\Delta u)^2dx\right).
$$
From this, we get
$$
\frac{\int_{\Omega} |\nabla u|^2dx}{\int_{\Omega} u^2dx}\le\frac{\int_{\Omega} (\Delta u)^2dx}{\int_{\Omega} |\nabla u|^2dx}
$$
for all $u\in H^2_0(\Omega)$. Thus, by the above inequality we deduce that
\begin{multline*}
\lambda_j=\min_{\substack{U\subset H^1_0(\Omega)\\{\rm dim}\ U=j}}\max_{u\in U\setminus\{0\}}\frac{\int_{\Omega} |\nabla u|^2dx}{\int_{\Omega} u^2dx}\leq \inf_{\substack{U\subset H^2_0(\Omega)\\{\rm dim}\ U=j}}\max_{u\in U\setminus\{0\}}\frac{\int_{\Omega} |\nabla u|^2dx}{\int_{\Omega} u^2dx}\\
\leq \min_{\substack{U\subset H^2_0(\Omega)\\{\rm dim}\ U=j}}\max_{u\in U\setminus\{0\}}\frac{\int_{\Omega} (\Delta u)^2dx}{\int_{\Omega} |\nabla u|^2dx}=\sigma_j.
\end{multline*}
We remark that when the domain is regular enough ($C^2$ or convex), the inequalities are strict, otherwise we would find an eigenfunction associated with an eigenvalue of the Dirichlet Laplacian $\lambda_j$ belonging to $H^2_0(\Omega)$, which is impossible.

\begin{rem}
We also cite here the following inequality due to Payne \cite{payne}
\begin{equation}\label{payne_0}
\Lambda_1\ge\lambda_1\sigma_1.
\end{equation}
This inequality is obtained from a similar comparison of the Rayleigh quotients and is therefore valid for any domain $\Omega\subset\mathbb R^d$ of finite measure. Moreover, it can be generalized to
\begin{equation}
\label{generalized_payne}
\Lambda_j\ge\max\{\lambda_1\sigma_j,\lambda_j\sigma_1\}.
\end{equation}
The variational characterization of $\Lambda_j,\lambda_j$, and $\sigma_j$ does not allow for any improvement of \eqref{generalized_payne}. However, in the one-dimensional case we establish the analogue of \eqref{payne_0} for all $j\in\mathbb N$, see Proposition \ref{generalized_payne_thm}.
\end{rem}

\end{rem}

\subsection{Proof of Weyl's law for buckling eigenvalues}

Theorem \ref{weyl_buckling} is now a consequence of Theorems \ref{R2-lower-thm} and \ref{R1-upper-thm}.
 
\begin{proof}[Proof of Theorem \ref{weyl_buckling}] 
From inequality \eqref{Buckling-ev-R-1-sharp-upper-bound} and the fact that $R_2(z)=2\int_{0}^{z}R_1(t)\,dt$, we deduce for all $z>0$
  \begin{equation*}
    R_2(z)\leq \frac{8}{(d+2)(d+4)}\,(2\pi)^{-d}B_d|\Omega|z^{2+\frac{d}{2}},
  \end{equation*}
  showing that
  \begin{equation*}
     \limsup_{z\rightarrow+\infty}R_2(z)z^{-2-\frac{d}{2}}\leq\frac{8}{(d+2)(d+4)}\,(2\pi)^{-d}B_d|\Omega|.
  \end{equation*}
Combining this with \eqref{R2-lower}  implies \eqref{Buckling-ev-Weyl-R-2}.
  
Weyl's law for the counting function \eqref{Buckling-ev-Weyl-R-0} follows from \eqref{Buckling-ev-Weyl-R-2} by applying twice Lemma \ref{technical} here below, since
  $\displaystyle R_2(z)=2\int_{0}^{z}R_1(t)\,dt$ and  $\displaystyle R_1(z)=\int_{0}^{z}N(t)\,dt$.

\end{proof}

We prove now the technical lemma used in the proof of Theorem \ref{weyl_buckling}. An alternative formulation may be found in \cite[Lemma 3]{geisinger}.
\begin{lem}\label{technical}
  Let $f:[0,+\infty)\rightarrow \mathbb{R}$ be an increasing function and $\displaystyle F(z)=\int_{0}^{z}f(t)\,dt$. If
  \begin{equation}\label{F-limit}
    \lim_{z\rightarrow+\infty}z^{-1-p}F(z)=1
  \end{equation}
  for some $p>0$, then
  \begin{equation*}
    \lim_{z\rightarrow+\infty}z^{-p}f(z)=p+1.
  \end{equation*}
\end{lem}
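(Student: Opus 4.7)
The plan is to sandwich $f(z)$ between two difference quotients of $F$ by exploiting the monotonicity of $f$. Fix $h\in(0,1)$. Since $f$ is non-decreasing, on the interval $[z,(1+h)z]$ we have $f(t)\geq f(z)$, while on $[(1-h)z,z]$ we have $f(t)\leq f(z)$. Integrating and dividing by the length $hz$ of each interval gives, for every $z>0$,
$$
\frac{F(z)-F((1-h)z)}{hz}\;\leq\; f(z)\;\leq\; \frac{F((1+h)z)-F(z)}{hz}.
$$

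The next step is to multiply by $z^{-p}$ and rewrite both bounds so that every $F$-term appears in the normalized form $s^{-1-p}F(s)$ that the hypothesis controls. For instance,
$$
\frac{F((1+h)z)-F(z)}{hz^{p+1}} \;=\; \frac{(1+h)^{p+1}\,\big((1+h)z\big)^{-1-p}F((1+h)z)\;-\;z^{-1-p}F(z)}{h},
$$
and an analogous identity for the lower bound. Using \eqref{F-limit}, each quantity of the form $s^{-1-p}F(s)$ tends to $1$ as $z\to +\infty$, so that
$$
\frac{1-(1-h)^{p+1}}{h}\;\leq\; \liminf_{z\to+\infty}\frac{f(z)}{z^p}\;\leq\; \limsup_{z\to+\infty}\frac{f(z)}{z^p}\;\leq\; \frac{(1+h)^{p+1}-1}{h}.
$$

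Finally, I would let $h\to 0^+$. Both extreme quantities are difference quotients of the function $x\mapsto x^{p+1}$ at $x=1$, so each converges to $p+1$ (e.g.\ by the Taylor expansion $(1\pm h)^{p+1}=1\pm(p+1)h+O(h^2)$). Hence $z^{-p}f(z)\to p+1$, as claimed.

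The argument is essentially routine; the only mildly delicate point is making sure the difference-quotient identities are written so that \eqref{F-limit} can be applied directly term by term, which is why I would explicitly factor out powers of $(1\pm h)$ before passing to the limit. No monotonicity or sign hypothesis beyond what is stated is needed, and the result is uniform enough in $h$ to let $h\to 0$ after $z\to\infty$ without any interchange issue.
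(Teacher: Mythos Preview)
Your proof is correct and follows essentially the same approach as the paper: both arguments sandwich $f(z)$ between difference quotients of $F$ using monotonicity, then invoke the asymptotic \eqref{F-limit} and finally shrink the increment. The only cosmetic difference is that you use a multiplicative step $hz$ from the outset and pass to $\limsup/\liminf$ before sending $h\to 0$, whereas the paper starts with an additive increment $h$, works $\epsilon$--$\delta$ style, and then chooses $h=\sqrt{\epsilon}\,z$; your packaging is arguably a bit cleaner since the multiplicative scaling matches the power-law hypothesis directly.
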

\begin{proof}
  Since $f$ is increasing, for any $z>0$ and $h>0$
  \begin{equation}\label{F-f-ineq-1}
    F(z+h)-F(z)=\int_{z}^{z+h}f(t)\,dt\geq h f(z).
  \end{equation}
  By the definition of the limit \eqref{F-limit}, for any $\epsilon>0$ there exists $z_{\epsilon} \geq 0$ such that $\displaystyle |z^{-1-p}F(z)-1|<\epsilon$ for all $z>z_{\epsilon}$. Therefore for all $h>0$, $z>z_{\epsilon}$ inequality \eqref{F-f-ineq-1} implies
  \begin{equation*}
  \begin{split}
    f(z)&\leq (1+\epsilon)\,\frac{(z+h)^{p+1}-z^{p+1}}{h}+\frac{2\epsilon z^{p+1}}{h}\\
   &\leq (1+\epsilon)(p+1)(z+h)^{p}+\frac{2\epsilon z^{p+1}}{h}.\\
  \end{split}
  \end{equation*}
  We choose $h=\sqrt{\epsilon}\,z$. Then for all $z>z_{\epsilon}$
  \begin{equation*}
    z^{-p}f(z)\leq (p+1)(1+\epsilon)(1+\sqrt{\epsilon})^{p}+2\sqrt{\epsilon}
  \end{equation*}
  and hence
  \begin{equation}\label{lim-sup-f}
    \limsup_{z\rightarrow+\infty}z^{-p}f(z)\leq p+1.
  \end{equation}
  Similarly, for all $0<h<z$
   \begin{equation*}
    F(z)-F(z-h)=\int_{z-h}^{z}f(t)\,dt\leq h f(z),
  \end{equation*}
and therefore given $\epsilon>0$ for any $z>z-h>z_{\epsilon}$,
\begin{equation*}
  \begin{split}
    f(z)&\geq (1+\epsilon)\,\frac{z^{p+1}-(z-h)^{p+1}}{h}-\frac{2\epsilon z^{p+1}}{h}\\
   &\geq (1+\epsilon)(p+1)(z-h)^{p}-\frac{2\epsilon z^{p+1}}{h}.\\
  \end{split}
  \end{equation*}
  As before we choose $h=\sqrt{\epsilon}\,z$. Then for any $0<\epsilon<\frac{1}{4}$ we have $\displaystyle z-h >\frac{z}{2}$ and hence for all $z>2z_{\epsilon}$,
   \begin{equation*}
    z^{-p}f(z)\geq (p+1)(1+\epsilon)(1-\sqrt{\epsilon})^{p}-2\sqrt{\epsilon}
  \end{equation*}
  and hence
  \begin{equation*}
    \liminf_{z\rightarrow+\infty}z^{-p}f(z)\geq p+1
  \end{equation*}
  which, together with \eqref{lim-sup-f}, proves the lemma.
\end{proof}

%

\section{Two term asymptotics for the buckling problem on balls}\label{sec:4}
The aim of this section is to prove Theorem \ref{conj_ball}, namely the validity of the two-terms asymptotic expansion stated in Conjecture \ref{conj_weyl} for an open ball in $\mathbb R^d$. Since buckling eigenvalues behave like the eigenvalues of the Laplacian under scaling, it is sufficient to prove Theorem \ref{conj_ball}  for the unit ball in $\mathbb R^d$ centered in zero.

Before proceeding with this analysis, we believe that it is worth recalling the formal arguments which allow us to state Conjecture \ref{conj_weyl}. In the case of the biharmonic operator (with Dirichlet, Neumann, Navier boundary conditions) two-terms asymptotics for the counting function have been computed in \cite{bps} by applying the arguments of \cite{safvas}. On the other hand, the techniques in \cite{safvas} do not apply, in principle, to the case of the eigenvalues of an operator pencil. Nevertheless it is possible, at least formally, to exploit the arguments of \cite{safvas} also in this situation. This leads to formula \eqref{buckling-two-term-asympt-N-of-z} which is then a reasonable ansatz for a two-terms expansion. 

In analogy with \cite[Theorem 1.6.1]{safvas} and \cite[Theorem 3.2]{bps}, we write
\begin{equation}\label{ansatz}
N(z)=c_0z^{\frac{d}{2}}-c_1z^{\frac{d-1}{2}}+o\left( z^{\frac{d-1}{2}}\right),
\end{equation}
where
$$
c_0=(2\pi)^{-d}\int_{T^*\Omega}\chi_{\{A(x,\xi)^2\leq A(x,\xi)\}}dxd\xi\,,\ \ \ c_1=(2\pi)^{1-d}\int_{T^*\partial\Omega}{\rm shift}^+(1,x',\xi')dx'd\xi'.
$$
Here $(x,\xi)$ are the elements of the cotangent bundle $T^*\Omega$, $(x',\xi')$ are the elements of the cotangent bundle $T^*\partial\Omega$, and $A(x,\xi)$ is the principal symbol associated with the operator, which, in the buckling case, corresponds to $|\xi|^2$. Therefore we immediately have $c_0=(2\pi)^{-d}B_d|\Omega|$. On the other hand, the function ${\rm shift}^+$ appearing in the formula for $c_1$ is the so-called spectral shift function associated with the auxiliary problem
\begin{equation}\label{auxiliary}
\begin{cases}
T_{\xi'}^2v(x_1)=\eta T_{\xi'}v(x_1)\,, & x_1\in[0,+\infty),\\
v(0)=v'(0)=0,
\end{cases}
\end{equation}
where $T_{\xi'}=|\xi'|^2-\frac{d^2}{dx_1^2}$. We refer to \cite[Chapter 1.6]{safvas} for the precise definition of the spectral shift function.  Roughly speaking, the auxiliary problem \eqref{auxiliary} is obtained from \eqref{buckling} by locally flattening the boundary in such a way that, locally, $\Omega=\{(x_1,...,x_d):x_1>0\}$ and $\partial\Omega=\{(0,x_2,...,x_d)\}$, and by taking the Fourier transform with respect to $x'=(x_2,...,x_d)$. Generalized eigenfunctions of \eqref{auxiliary} for $\eta\geq|\xi'|^2$ are of the form $a_1^- e^{i\zeta_1^- x_1}+a_1^+ e^{i\zeta_1^+ x_1}+a_2^+ e^{i\zeta_2^+ x_1}$, where $\zeta_1^{\pm}=\pm\sqrt{\eta-|\xi'|^2}$, $\zeta_2^+=i|\xi'|$. On the other hand, problem \eqref{auxiliary} has no solution for $\eta<|\xi'|^2$ compatible with the boundary conditions. Imposing the boundary conditions in \eqref{auxiliary} to a generalized eigenfunction we obtain the explicit expressions for $a_1^+$ and $a_1^-$, which we omit here. According to \cite[Chapter 1.6]{safvas}, it turns out that the ${\rm shift}^+$ function can be written as
$$
{\rm shift}^+(\eta,x',\xi')=\frac{\arg_0\left(i\frac{a_1^+}{a_1^-}\right)}{2\pi},
$$
where $\arg_0$ is a the only branch of the complex argument that satisfies
\begin{equation}\label{auxiliary1}
\lim_{\eta\rightarrow |\xi'|^2}\left|\arg_0\left(i\frac{a_1^+}{a_1^-}\right)\right|=\frac{\pi}{2}.
\end{equation}
Finally, from condition  \eqref{auxiliary1} and from the explicit expressions of $a_1^+,a_1^-$, we get
\begin{equation}\label{shift}
{\rm shift}^+(\eta,x',\xi')=-\frac{3}{2}\pi+2\arcsin\left(\frac{|\xi'|}{\sqrt{\eta}}\right).
\end{equation}
Now formula \eqref{buckling-two-term-asympt-N-of-z} follows from \eqref{ansatz}, \eqref{shift}, and standard computations. Interested readers may refer to \cite[Chapter 1.6]{safvas} for more details, and to \cite{bps} for the corresponding computations in the case of the biharmonic operator.

\subsection{Buckling eigenvalues and eigenfunctions of the ball}
We recall that an eigenfunction of problem \eqref{buckling} on the unit ball of $\mathbb R^d$ can be written in spherical coordinates $(r,\theta)$, $\theta=(\theta_1,...,\theta_{n-1})$, as a product of a radial part and an angular part. In particular, any eigenfunction has the form
\begin{equation}\label{generic_eigenfunction_ball}
\left(j_l(\sqrt{\sigma})r^l-j_l(\sqrt{\sigma}r)\right)H_l(\theta),
\end{equation}
for some $l\in\mathbb N\cup\{0\}$. Here $H_l(\theta)$ is a spherical harmonic of degree $l$ in $\mathbb R^d$, while $j_l$ denotes the ultraspherical Bessel function of the first kind of order $l$ in $\mathbb R^d$, namely
$$
j_l(z)=z^{1-\frac{d}{2}}J_{\frac{d}{2}-1+l}(z),
$$
where $J_{\nu}(z)$ denotes the standard Bessel function of the first kind of order $\nu$. The proof that any eigenfunction of \eqref{buckling} on the unit ball is of the form \eqref{generic_eigenfunction_ball} is standard, we may refer e.g., to \cite{buosoprovenzano} and references therein.

We note that, from \eqref{generic_eigenfunction_ball}, it immediately follows that the eigenfunctions vanish at the boundary, therefore they already satisfy one of the boundary conditions in \eqref{buckling}. Thus the eigenvalues are determined, for each given $l\in\mathbb N\cup\{0\}$, by imposing the other boundary condition, which is equivalent to the equation
\begin{equation}\label{implicit0}
j_l(\sqrt{\sigma})l-\sqrt{\sigma}j_l'(\sqrt{\sigma})=0.
\end{equation}
From the definition of $j_l$ and from the recurrence relation $J_{\nu}'(z)=J_{\nu+1}(z)+\frac{\nu}{z}J_{\nu}(z)$ (see e.g., \cite{abram}) we immediately verify that \eqref{implicit0} is equivalent to
\begin{equation*}
J_{\frac{d}{2}+l}(\sqrt{\sigma})=0.
\end{equation*}
It is also worth recalling that the eigenvalues of the Dirichlet Laplacian on the unit ball in $\mathbb R^d$ are the zeros of the equation
\begin{equation*}
J_{\frac{d}{2}+l-1}(\sqrt{\lambda})=0.
\end{equation*}
In both cases, the multiplicity of an eigenvalue corresponding to angular momentum $l\in\mathbb N\cup\{0\}$ is given by
\begin{equation*}
  M_{l,d}:=\binom{l+d-1}{d-1}-\binom{l+d-3}{d-1}=\frac{(2l+d-2)\cdot (l+d-3)!}{l! (d-2)!},
\end{equation*}
which is the dimension of the space of spherical harmonics of order $l$ in $\mathbb R^d$. Let us also recall the identity
\begin{equation}\label{radial-ev-multiplicity-2}
  M_{l,d}=\frac{(2l+d-2)}{l}\,\binom{l+d-3}{d-2}
\end{equation}
and the addition formula
\begin{equation}\label{multiplicity-addition-formula}
  M_{l+1,d+1}=M_{l,d+1}+M_{l+1,d}.
\end{equation}

\subsection{Zeros of Bessel functions and radial eigenvalues}
Let $\sigma_{d,l,n}$ denote be the $n$-th radial eigenvalue corresponding to angular momentum $l$ for the buckling problem on the $d$-dimensional unit ball, namely
\begin{equation*}
  \sigma_{d,l,n}=x_{l+\frac{d}{2},n}^2,
\end{equation*}
where $x_{l+\frac{d}{2},n}$ denotes the $n$-th zero of the Bessel function $J_{l+\frac{d}{2}}$, $n\in\mathbb N$. The number of nodes of the radial part of the corresponding eigenfunction is $n-1$. Similarly, by $\lambda_{d,l,n}$ we denote the $n$-th radial eigenvalue corresponding to angular momentum $l$ of the Dirichlet Laplacian on the $d$-dimensional unit ball, namely
\begin{equation*}
\lambda_{d,l,n}=x_{l-1+\frac{d}{2},n}^2,
\end{equation*}
Note that $\sigma_{d,l,n}$ and $\lambda_{d,l,n}$  have the same multiplicities. We also have the relation
\begin{equation}\label{radial-eigenvalues-buckling-Dirichlet-relation}
  \sigma_{d,l,n}=\lambda_{d,l+1,n}=x_{l+\frac{d}{2},n}^2.
\end{equation}
However, here multiplicities are $M_{l,d}$ for the buckling eigenvalues and $M_{l+1,d}$ for the Dirichlet Laplacian eigenvalues. When $n$ is large the zeros $x_{l+\frac{d}{2},n}$ admit the asymptotic expansion (see \cite{abram,GR})
\begin{equation}\label{Zeros-expansion}
  x_{l+\frac{d}{2},n}= \frac{\pi(4n+2l+d-1)}{4}-\frac{4\left(l+\frac{d}{2}\right)^2-1}{2\pi(4n+2l+d-1)}+O(n^{-3}).
\end{equation}
\subsection{Counting functions}
Throughout this section, for $z>0$ we denote by $N^B_d(z)$ the number of buckling eigenvalues below $z$. For any $l\in\mathbb N\cup\{0\}$ fixed we denote by $N^B_{d,l}(z)$ the number of radial buckling eigenvalues $\sigma_{d,l,n}$ below $z$. By definition,

\begin{equation*}
  N^B_d(z)=\sum_{l=0}^{\infty} M_{l,d}\,N^B_{d,l}(z).
\end{equation*}
Note that the above sum is always finite since there is an index $l_{\max}$ depending on $z$ and $d$ such that $N^B_{d,l}(z)=0$ for all $l> l_{\max}$. We define $N^D_d(z)$, $N^D_{d,l}(z)$ for the counting functions of the Dirichlet Laplacian in a similar way, and hence
\begin{equation*}
  N^D_d(z)=\sum_{l=0}^{\infty} M_{l,d}\,N^D_{d,l}(z).
\end{equation*}
For any $l\in\mathbb N\cup\{0\}$ fixed, let $N^J_{d,l}(\zeta)$ be the number of zeros of the Bessel function $J_{l+\frac{d}{2}}$ below $\zeta$. Then $N^J_{d+2,l}(\zeta)=N^J_{d,l+1}(\zeta)$ and by \eqref{radial-eigenvalues-buckling-Dirichlet-relation}
\begin{equation*}
  N^B_{d,l}(z)=N^J_{d,l}(z^{\frac{1}{2}})= N^D_{d,l+1}(z).
\end{equation*}
\subsection{Asymptotic Expansions of Counting functions}
We recall that the counting function $ N^D_d(z)$ for the Dirichlet Laplacian on the unit ball admits the following asymptotic expansion as $z$ tends to $+\infty$:

\begin{equation}\label{Dirichlet-Laplacian-two-term-asympt-N-of-z-unit-ball}
  N^D_d(z)=\frac{1}{\Gamma(\frac{d}{2}+1)^2}\,\left(\frac{z}{4}\right)^{\frac{d}{2}}
  -\frac{d\pi^{\frac{1}{2}}}{4\Gamma(\frac{d}{2}+1)\Gamma(\frac{d+1}{2})}\,\left(\frac{z}{4}\right)^{\frac{d-1}{2}}+o\left(z^{\frac{d}{2}-\frac{1}{2}}\right).
\end{equation}
Therefore Conjecture \ref{conj_weyl} for the counting function of the buckling eigenvalues of the unit ball is verified provided that
\begin{equation}\label{Buckling-two-term-asympt-N-of-z-unit-ball}
 N^B_d(z)- N^D_d(z)=-(2\pi)^{1-d}B_{d-1}^2z^{\frac{d-1}{2}}+o\left(z^{\frac{d}{2}-\frac{1}{2}}\right).
\end{equation}
Note that the quantity on the right-hand side of \eqref{Buckling-two-term-asympt-N-of-z-unit-ball} corresponds precisely to the leading term of the asymptotic expansion for $N^D_{d-1}(z)$ and $N^B_{d-1}(z)$ on the $(d-1)$-dimensional unit ball. 
\subsection{Proof of two-terms asymptotics for the unit disc}
We provide first the proof of Theorem \ref{conj_ball} in the case $d=2$, which has to be treated separately.
\begin{thm}
  For the unit disk in $\mathbb{R}^2$ we have the following asymptotic expansion as $z$ tends to $+\infty$:
  \begin{equation*}
    N^B_2(z)- N^D_2(z)=-\frac{2}{\pi}\,z^{\frac{1}{2}}+o\left(z^{\frac{1}{2}}\right),
  \end{equation*}
  or, equivalently,
  \begin{equation*}
    N^B_2(z)=\frac{1}{4}\,z-\left(\frac{1}{2} +\frac{2}{\pi}\right)\,z^{\frac{1}{2}}+o\left(z^{\frac{1}{2}}\right).
  \end{equation*}
\end{thm}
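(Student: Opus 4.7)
The plan is to convert the difference $N^B_2(z) - N^D_2(z)$ into a simple closed expression via the identification $N^B_{2,l}(z) = N^D_{2,l+1}(z)$, and then apply the one-term asymptotic for zeros of Bessel functions.

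First, on the disk one has $M_{0,2} = 1$ and $M_{l,2} = 2$ for $l\geq 1$, so
$$N^B_2(z) - N^D_2(z) = \sum_{l=0}^{\infty} M_{l,2}\bigl[N^D_{2,l+1}(z) - N^D_{2,l}(z)\bigr].$$
Since for each $z$ only finitely many terms are nonzero (there is $l_{\max}(z)$ beyond which both counting functions vanish), I would Abel-rearrange. Writing $a_l := N^D_{2,l}(z)$ and using $M_{l,2} = 2-\delta_{l,0}$,
$$N^B_2(z) - N^D_2(z) = (a_1 - a_0) + 2\sum_{l\geq 1}(a_{l+1}-a_l) = -a_0 - a_1,$$
by telescoping. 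This yields the key identity
$$N^B_2(z) - N^D_2(z) = -N^D_{2,0}(z) - N^D_{2,1}(z).$$

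Second, $N^D_{2,l}(z)$ equals the number of positive zeros of $J_l$ in $(0,\sqrt{z})$. By the asymptotic \eqref{Zeros-expansion}, applied with $d=2$ and fixed $l$, one has $x_{l,n} = \pi(n + l/2 - 1/4) + O(1/n)$, from which
$$N^D_{2,l}(z) = \frac{\sqrt{z}}{\pi} + O(1) \qquad (z\to+\infty).$$
Applying this with $l=0$ and $l=1$ gives
$$N^B_2(z) - N^D_2(z) = -\frac{2\sqrt{z}}{\pi} + O(1) = -\frac{2}{\pi}\,z^{1/2} + o(z^{1/2}),$$
which is the first asserted equality. The second follows by adding the $d=2$ instance of \eqref{Dirichlet-Laplacian-two-term-asympt-N-of-z-unit-ball}, which specialises to $N^D_2(z) = z/4 - \sqrt{z}/2 + o(\sqrt{z})$.

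I do not anticipate any genuine obstacle: the decisive step is the purely combinatorial telescoping made possible by the coincidence $\sigma_{2,l,n} = \lambda_{2,l+1,n}$, and the remaining analytic input reduces to the coarsest ($O(1)$) asymptotic for the zeros of $J_0$ and $J_1$. The only point deserving a line of justification is that the rearrangement above is legitimate, and this is immediate because the series is a finite sum for every fixed $z$.
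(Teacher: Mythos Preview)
Your proof is correct and follows essentially the same route as the paper: both derive the identity $N^B_2(z)-N^D_2(z)=-N^D_{2,0}(z)-N^D_{2,1}(z)$ from the relation $N^B_{2,l}=N^D_{2,l+1}$ together with the constancy of $M_{l,2}=2$ for $l\ge1$, and then invoke the leading asymptotic $N^D_{2,l}(z)=\pi^{-1}\sqrt{z}+O(1)$ for $l=0,1$. The only cosmetic difference is that you package the index shift as an Abel/telescoping argument, whereas the paper writes out the shift directly.
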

\begin{proof}
  For any $l\geq 1$ the multiplicities $M_{l,2}=2$ do not depend on $l$. By \eqref{radial-eigenvalues-buckling-Dirichlet-relation} we have
  $\sigma_{d,l,n}=\lambda_{d,l+1,n}$ for all $l\in\mathbb N\cup\{0\}$ and therefore
  \begin{equation*}
    \begin{split}
       N^B_2(z) & =N^B_{2,0}(z)+\sum_{l=1}^{\infty}2 \,N^B_{2,l}(z)\\
         & =N^D_{2,1}(z)+\sum_{l=1}^{\infty}2 \,N^D_{2,l+1}(z)\\
         &= -N^D_{2,1}(z)+\sum_{l=1}^{\infty}2 \,N^D_{2,l}(z)\\
         &= N^D_2(z) - N^D_{2,0}(z)-N^D_{2,1}(z).\\
    \end{split}
  \end{equation*}
  According to the asymptotics \eqref{Zeros-expansion} for the zeros of Bessel functions the leading terms for the counting functions $N^D_{2,0}(z)$ and $ N^D_{2,1}$ are $\frac{z^{\frac{1}{2}}}{\pi}$. This proves the theorem.
\end{proof}
\subsection{Proof of two-terms asymptotics for the unit ball}
We prove here Theorem \ref{conj_ball} for $d\geq 3$.
\begin{thm}
  For the unit ball in $\mathbb{R}^d$, $d\geq 3$, we have the asymptotic expansion as $z$ tends to $+\infty$:
  \begin{equation}\label{Buckling-two-term-asympt-N-of-z-unit-ball-1}
    N^B_d(z)- N^D_d(z)= -N^D_{d-1}(z)+o\left(z^{\frac{d}{2}-\frac{1}{2}}\right).
  \end{equation}
\end{thm}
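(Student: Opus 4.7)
The plan is to reduce the two-term expansion to a summation estimate and dispatch the latter using the interlacing of Bessel zeros. The key algebraic input is $N^B_{d,l}(z) = N^D_{d,l+1}(z)$, a direct consequence of $\sigma_{d,l,n} = x_{l+d/2,n}^2 = \lambda_{d,l+1,n}$. Substituting into $N^B_d(z) = \sum_{l\geq 0} M_{l,d}\, N^B_{d,l}(z)$ and reindexing gives $N^B_d(z) = \sum_{l\geq 1} M_{l-1,d}\, N^D_{d,l}(z)$. The addition formula \eqref{multiplicity-addition-formula} is equivalent to $M_{l,d} - M_{l-1,d} = M_{l,d-1}$, so proceeding exactly as in the $d=2$ case of the preceding theorem we obtain
\begin{equation*}
    N^D_d(z) - N^B_d(z) = \sum_{l\geq 0} M_{l,d-1}\, N^D_{d,l}(z).
\end{equation*}
Since $N^D_{d-1}(z) = \sum_{l\geq 0} M_{l,d-1}\, N^D_{d-1,l}(z)$ by definition, the asymptotic \eqref{Buckling-two-term-asympt-N-of-z-unit-ball-1} (read in the same sign convention as the $d=2$ case) reduces to
\begin{equation*}
    \sum_{l\geq 0} M_{l,d-1}\, \bigl[N^D_{d,l}(z) - N^D_{d-1,l}(z)\bigr] = o\bigl(z^{(d-1)/2}\bigr).
\end{equation*}

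Next I would bound each summand uniformly. By construction $N^D_{d,l}(z)$ counts positive zeros of $J_{l+(d-2)/2}$ below $\sqrt{z}$, whereas $N^D_{d-1,l}(z)$ counts those of $J_{l+(d-3)/2}$, so the two Bessel orders differ by exactly $\tfrac12$. The strict monotonicity of $x_{\nu,n}$ in $\nu$ (for $\nu > -1$) together with the classical interlacing $x_{\nu,n} < x_{\nu+1,n} < x_{\nu,n+1}$ implies
\begin{equation*}
    x_{l+(d-3)/2,n} < x_{l+(d-2)/2,n} < x_{l+(d-3)/2,n+1},
\end{equation*}
from which $\bigl|N^D_{d,l}(z) - N^D_{d-1,l}(z)\bigr| \leq 1$ uniformly in $l$ and $z$.

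Finally, I would truncate the summation. Since $x_{\nu,1} \geq \nu$ for $\nu \geq 0$, both radial counts vanish unless $l \leq \sqrt{z} + O(1)$. Using the well-known asymptotic $M_{l,d-1} \sim 2\, l^{d-3}/(d-3)!$ as $l \to \infty$ (with the obvious small-dimensional adjustment for $d=3$, where $M_{l,2}$ is bounded), the partial sums satisfy $\sum_{l\leq L} M_{l,d-1} = O(L^{d-2})$; choosing $L = \lfloor\sqrt{z}\rfloor + O(1)$ gives
\begin{equation*}
    \sum_{l\geq 0} M_{l,d-1}\, \bigl|N^D_{d,l}(z) - N^D_{d-1,l}(z)\bigr| = O\bigl(z^{(d-2)/2}\bigr) = o\bigl(z^{(d-1)/2}\bigr),
\end{equation*}
which closes the argument.

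The only technical obstacle is the uniform unit bound on the term-by-term difference; everything else is bookkeeping of binomial sums together with the elementary inequality $x_{\nu,1} \geq \nu$. A heavier but more informative alternative would replace the interlacing step by Olver's uniform asymptotic expansion for $J_\nu$ in the transitional regime $l \asymp \sqrt{z}$; that route not only produces the $o(z^{(d-1)/2})$ remainder but also recovers directly the precise subleading constant predicted by Conjecture \ref{conj_weyl}, providing an independent consistency check of the two-term expansion.
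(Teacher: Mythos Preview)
Your argument is correct and follows essentially the same route as the paper: reindex via $N^B_{d,l}(z)=N^D_{d,l+1}(z)$, apply the addition formula for the multiplicities to obtain $N^D_d(z)-N^B_d(z)=\sum_{l\ge 0}M_{l,d-1}N^D_{d,l}(z)$, subtract $N^D_{d-1}(z)$, and bound the residual sum using interlacing of Bessel zeros together with $x_{\nu,1}\ge\nu$ to cut off at $l=O(z^{1/2})$. The only cosmetic difference is that you derive the half-integer interlacing from monotonicity in $\nu$ combined with the classical $\nu\mapsto\nu+1$ interlacing, whereas the paper cites the $0<\varepsilon<1$ interlacing directly; you also correctly flag the sign convention, since both derivations actually produce $N^B_d(z)-N^D_d(z)=-N^D_{d-1}(z)+o(z^{(d-1)/2})$ in line with the $d=2$ case.
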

\begin{proof}
   By \eqref{radial-eigenvalues-buckling-Dirichlet-relation} we have
  $\sigma_{d,l,n}=\lambda_{d,l+1,n}$ for all $l\in\mathbb N\cup\{0\}$ and therefore
  \begin{equation*}
    \begin{split}
       N^B_d(z) & =\sum_{l=0}^{\infty}  M_{l,d}\,N^B_{d,l}(z)\\
         & =\sum_{l=0}^{\infty} M_{l,d}\,N^D_{d,l+1}(z)\\
         &= \sum_{l=0}^{\infty} M_{l+1,d}\,N^D_{d,l+1}(z)+\sum_{l=0}^{\infty}( M_{l,d}-M_{l+1,d})\,N^D_{d,l+1}(z)\\
         &= N^D_d(z) - N^D_{d,0}(z)+\sum_{l=0}^{\infty}( M_{l,d}-M_{l+1,d})\,N^D_{d,l+1}(z).\\
    \end{split}
  \end{equation*}
  By the addition formula \eqref{multiplicity-addition-formula} we have: $M_{l,d}-M_{l+1,d}=-M_{l+1,d-1}$. Hence
  \begin{equation}\label{step}
    \begin{split}
       N^B_d(z)-N^D_d(z) & =- N^D_{d,0}(z)-\sum_{l=0}^{\infty}M_{l+1,d-1}\,N^D_{d,l+1}(z)\\
         & =-\sum_{l=0}^{\infty} M_{l,d-1}\,N^D_{d,l}(z)\\
         &= -N^D_{d-1}(z)-\sum_{l=0}^{\infty} M_{l,d-1}\,(N^D_{d,l}(z)-N^D_{d-1,l}(z)).\\
    \end{split}
  \end{equation}
  We are left to show that $\sum_{l=0}^{\infty} M_{l,d-1}\,(N^D_{d,l}(z)-N^D_{d-1,l}(z))=o\left(z^{\frac{d}{2}-\frac{1}{2}}\right)$. First, from \eqref{radial-ev-multiplicity-2} we deduce that there is a positive constant $C$ independent of $l$ and $d$ such that $M_{l,d-1}\leq C\,l^{d-3}$. Moreover
 $$
 \sum_{l=0}^{\infty} M_{l,d-1}\,(N^D_{d,l}(z)-N^D_{d-1,l}(z))=\sum_{l=0}^{l_{\max}} M_{l,d-1}\,(N^D_{d,l}(z)-N^D_{d-1,l}(z)).
 $$
  Now we claim that there exists a positive constant $C'$ such that $l_{\max}\leq C' z^{\frac{1}{2}}$. To do so, we note that $F_{\nu}(x):=J_{\nu}(Ex)$ with $E>0$ satisfies the ordinary differential equation
  $$
  -F_{\nu}''(x)-\frac{1}{x}F_{\nu}'(x)+\frac{\nu^2}{x^2}F_{\nu}(x)=E^2 F_{\nu}(x).
  $$
  Multiplying both sides by $xF_{\nu}(x)$, assuming that $F_{\nu}(1)=J_{\nu}(E)=0$, and integrating over $(0,1)$ by parts we get
    $$
 \int_0^1 x\left((F_{\nu}'(x))^2+\frac{\nu^2}{x^2}F_{\nu}^2(x)\right)x dx=E^2\int_0^1 (F_{\nu}(x))^2xdx.
  $$
  Therefore $\nu^2\leq E^2$. Hence the first zero of $J_{\nu}$ is lower bounded by $\nu$. In our situation $\nu=l+\frac{d}{2}-1$ and $E=\lambda_{d-1,l,1}$, thus $N^D_{d-1,l}(z)=N^D_{d,l}(z)=0$ for $l\geq z^{\frac{1}{2}}-\frac{d-1}{2}+1$, proving the claim. 

Now we recall that the real positive zeros of $J_{\nu}$ and $J_{\nu+\varepsilon}$ are interlaced as long as $0<\varepsilon<1$ (see e.g., \cite{interlacing}), therefore $N^D_{d,l}(z)-N^D_{d-1,l}(z)\leq 1$. Then $\sum_{l=0}^{\infty} M_{l,d-1}\,(N^D_{d,l}(z)-N^D_{d-1,l}(z))=O(z^{\frac{d}{2}-1})$. Using this asymptotic relation  and formula \eqref{Dirichlet-Laplacian-two-term-asympt-N-of-z-unit-ball} for $N^D_{d-1}(z)$ in \eqref{step} we get \eqref{Buckling-two-term-asympt-N-of-z-unit-ball-1}. This concludes the proof. 
\end{proof}



\section{The buckling problem in one dimension}\label{sec:3}
When $d=1$ problem \eqref{buckling} in the open interval $(0,1)$ reads
\begin{equation}\label{1d-buckling-ev-problems}
\left\{\begin{array}{l}
u''''(x)=- \sigma u''(x), \quad x\in(0,1),\\
u(0)=u'(0)=u(1)=u'(1)=0. \\
\end{array}\right.
\end{equation}
Problem \eqref{1d-buckling-ev-problems} admits an increasing sequence of simple and positive eigenvalues $\{\sigma_j\}_j$ diverging to plus infinity, and they can be characterized through the minimax procedure exactly as in the higher dimensional case (see \eqref{buckling_minimax}). In particular, the eigenfunctions are of the form
\begin{equation*}
  u_j(x)=A\big(\cos(\gamma_jx)-1\big)+ \sin(\gamma_jx)-\gamma_jx,
\end{equation*}
where $  A=\frac{\sin(\gamma_j)-\gamma_j}{1-\cos(\gamma_j)}$ and
$\gamma_j$ is the $j$-th positive solution of the equation
\begin{equation*}
  2(1-\cos\gamma_j)-\gamma_j\sin(\gamma_j)=0,
\end{equation*}
that can be split into two different equations, namely $ \sin(\frac{\gamma_j}{2})=0$ or $  \tan(\frac{\gamma_j}{2})=\frac{\gamma_j}{2}$. The eigenvalues are given by
\begin{equation*}
  \sigma_j= \gamma_j^2.
\end{equation*}
Consequently, for positive integers $j$, the eigenvalues of the buckling problem are of the form
\begin{equation}\label{1d-buckling-ev-explicit}
  \sigma_j=\bigg(\pi(j+1)-t_j\bigg)^2
\end{equation}
where $t_j=0$ if $j$ is odd and $t_j\in(0,\pi)$ is the first positive root of the equation
\begin{equation}\label{t-n-equation}
  \sin \left(\frac{t}{2}\right)- \frac{2\cos(\frac{t}{2})}{\pi(j+1)-t}=0
\end{equation}
if $j$ is even. From \eqref{t-n-equation} we infer that $t_j=\frac{4}{\pi(j+1)}+o(\frac{1}{j})$ as $j$ tends to infinity, when $j$ is even. We are now ready to prove Theorem \ref{conj_1d}. Since buckling eigenvalues behave like Laplacian eigenvalues under scaling it is sufficient to prove Theorem \eqref{conj_1d} in the case $L=1$.

\begin{proof}[Proof of Theorem \ref{conj_1d}].
Let $L=1$. Then the buckling eigenvalues are given by \eqref{1d-buckling-ev-explicit}. We note that for any positive integer $k$
\begin{equation*}
  \sum_{j=1}^{k}z-\bigg(\pi(j+1)-t_j\bigg)^2=\sum_{j=1}^{k}z-\pi^2(j+1)^2+2\pi(j+1)t_j-t_j^2.
\end{equation*}
The last two terms of the sum will just bring a contribution of order $k$ as $k$ gets large. Therefore, choosing $k=\left[\frac{z^{\frac{1}{2}}-1}{\pi}\right]$, from
\begin{equation*}
  \sum_{j=1}^{k}z-\pi^2(j+1)^2=kz-\frac{\pi^2}{3}k^3-\frac{3\pi^2}{2}k^2-\frac{13\pi^2}{6}k
\end{equation*}
we get
\begin{equation*}
  R_1(z)= \frac{2}{3\pi}\,z^{\frac{3}{2}}-\frac{3z}{2}+O(z^{\frac{1}{2}}).
\end{equation*}
proving the claim.
\end{proof}

We include in this section another inequality relating $\sigma_j$ with the eigenvalues $\lambda_j$ and $\Lambda_j$ of problems \eqref{dirichlet_laplacian} and \eqref{dirichlet_bilaplacian} on $(0,1)$, which is interesting by itself.

\begin{prop}\label{generalized_payne_thm}
Let $\sigma_j,\lambda_j$ and $\Lambda_j$ be the eigenvalues of \eqref{buckling}, \eqref{dirichlet_laplacian} and \eqref{dirichlet_bilaplacian} on $(0,1)$, respectively. Then, for all $j\in\mathbb N$
\begin{equation*}
\Lambda_j>\lambda_j\sigma_j.
\end{equation*}
\end{prop}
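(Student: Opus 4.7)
The plan is to exploit the parity decomposition of $H^2_0(0,1)$ into functions even and odd around $x=\tfrac12$; each of problems \eqref{buckling}, \eqref{dirichlet_laplacian} and \eqref{dirichlet_bilaplacian} is diagonal with respect to this splitting. Using the explicit buckling computation from the excerpt, the Dirichlet Laplacian eigenfunctions $\sin(j\pi x)$, and the factorisation $\cos\mu\cosh\mu-1=-2\cos^2(\mu/2)\cosh^2(\mu/2)\bigl(\tan^2(\mu/2)-\tanh^2(\mu/2)\bigr)$ for the Bilaplacian, one verifies by inspection of the relevant intervals that for each of the three spectra the $k$-th symmetric eigenvalue is the $(2k-1)$-th eigenvalue in the full ordering while the $k$-th antisymmetric one is the $(2k)$-th. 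It therefore suffices to prove $\Lambda_j>\lambda_j\sigma_j$ separately inside each symmetry subspace, for every $k\ge 1$.

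\emph{Symmetric subspace ($j=2k-1$).} Here $\lambda_j=(2k-1)^2\pi^2$, $\sigma_j=(2k)^2\pi^2$ (the buckling branch $\gamma=2k\pi$), and $\Lambda_j=\mu_k^4$ where $\mu_k\in((2k-1)\pi,2k\pi)$ is the $k$-th positive root of $\tan(\mu/2)+\tanh(\mu/2)=0$. At $\mu/2=k\pi-\tfrac{\pi}{4}$ one has $\tan(\mu/2)+\tanh(\mu/2)=-1+\tanh(k\pi-\tfrac{\pi}{4})<0$; since $\tan+\tanh$ is monotone increasing on $((2k-1)\pi/2,k\pi)$, this forces $\mu_k>(4k-1)\pi/2$. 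The inequality $\mu_k^4>(2k-1)^2(2k)^2\pi^4$ then reduces to $(4k-1)^2>4(2k-1)(2k)$, i.e.\ to the tautology $1>0$.

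\emph{Antisymmetric subspace ($j=2k$).} Here $\lambda_j=(2k)^2\pi^2$, $\sigma_j=\gamma_k^2$ with $\gamma_k$ the $k$-th positive root of $\tan(\gamma/2)=\gamma/2$, and $\Lambda_j=\nu_k^4$ with $\nu_k$ the $k$-th positive root of $\tan(\nu/2)=\tanh(\nu/2)$, both lying in $(2k\pi,(2k+1)\pi)$. The inequality becomes $\nu_k^2>2k\pi\gamma_k$. The pole of $\tan$ at $(2k+1)\pi/2$ gives $\gamma_k<(2k+1)\pi$, so it is enough to prove $\nu_k^2>2k(2k+1)\pi^2$. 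Writing $\nu_k/2=k\pi+\alpha_k$ and combining $\tanh(x)>1-2e^{-2x}$ with the elementary bound $\arctan(1-\varepsilon)>\tfrac{\pi}{4}-\varepsilon$ (from $1/(1+t^2)\le 1$), one obtains $\nu_k>(4k+1)\pi/2-4e^{-2k\pi}$. Using the identity $(4k+1)^2/4-2k(2k+1)=\tfrac14$, squaring yields $\nu_k^2-2k(2k+1)\pi^2>\pi^2/4-4(4k+1)\pi e^{-2k\pi}$; this is positive because $(4k+1)e^{-2k\pi}$ is decreasing in $k\ge 1$ and $5e^{-2\pi}<\pi/16$.

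The main obstacle is the antisymmetric case, which is asymptotically tight: the gap $\nu_k^2-2k\pi\gamma_k$ tends to $\pi^2/4$ as $k\to\infty$, while $\nu_k$ sits within exponentially small distance of the pole $(4k+1)\pi/2$ of $\tan$. One must therefore locate $\nu_k$ with exponential accuracy, which is precisely what the $\tanh$ estimate provides. The symmetric case, by contrast, has slack of order $k$ and follows from a one-line algebraic identity.
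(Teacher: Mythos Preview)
Your proof is correct; the parity decomposition and the estimates in both subspaces go through as written. The paper argues along the same structural lines---splitting into odd and even $j$, with the odd case reducing to the same algebraic identity $(4k-1)^2>4(2k-1)(2k)$---but handles the even (antisymmetric) case differently. Rather than bounding $\gamma_k<(2k+1)\pi$ and then pinning down $\nu_k$ to within an exponentially small error, the paper couples the two spectra directly: writing $\Lambda_j^{1/4}=\pi(j+\tfrac12)-s_j$ and $\sigma_j^{1/2}=\pi(j+1)-t_j$, it shows that $\Lambda_j>\lambda_j\sigma_j$ follows from $s_j<t_j/2$, and proves the latter via the closed-form identities $\sin^2 s_j=\cosh^{-2}(\Lambda_j^{1/4})$ and $\sin^2(t_j/2)=4/(4+\sigma_j)$ together with the easy bound $\sigma_j\le 4\Lambda_j^{1/2}$. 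This avoids any numerical verification (such as $5e^{-2\pi}<\pi/16$) and keeps the argument purely algebraic. Your route, by contrast, decouples the two spectra entirely and pays for it with the exponential estimate on $\tanh$; this is a legitimate trade-off, and your remark that the even case is asymptotically tight with gap $\pi^2/4$ is a nice observation that the paper does not make explicit.
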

\begin{proof}
We recall from \cite[Proposition A.1]{bps} that the biharmonic Dirichlet eigenvalues on $(0,1)$ satisfy
\begin{equation}\label{1d-dir-ev-explicit}
\Lambda_j=\left(\pi\left(j+\frac{1}{2}\right)-(-1)^js_j\right)^4\,,\ \ \ 0<s_j<\frac{\pi}{2}.
\end{equation}
The Dirichlet eigenvalues of the Laplacian on $(0,1)$ are given by
$$
\lambda_j=\pi^2j^2,
$$
and the buckling eigenvalues are given by \eqref{1d-buckling-ev-explicit}. We distinguish the case $j$ odd and $j$ even.

\medskip
{\bf Odd $j$.} Assume that $j$ is odd. Then immediately we see that
$$
\Lambda_j-\lambda_j\sigma_j\geq \pi^4\left(j+\frac{1}{2}\right)^4-\pi^4j^2(j+1)^2=\frac{\pi^4}{16}\left(8j^2+8j+1\right)>0.
$$

\medskip
{\bf Even $j$.} Assume now that $j$ is even. Then
\begin{multline*}
\Lambda_j-\lambda_j\sigma_j=\left(\pi\left(j+\frac{1}{2}\right)-s_j\right)^4-\pi^2j^2\left(\pi(j+1)-t_j\right)^2\\
=\left(\pi j+\left(\frac{\pi}{2}-s_j\right)\right)^4-\pi^2j^2\left(\pi j+(\pi-t_j)\right)^2.
\end{multline*}
The right-hand side is strictly positive provided $s_j<\frac{t_j}{2}$. We claim that $s_j<\frac{t_j}{2}$ for all $j\in\mathbb N$. We recall from \cite[Proposition A.1]{bps} that
$$
\cos\left(\Lambda_j^{\frac{1}{4}}\right)\cosh\left(\Lambda_j^{\frac{1}{4}}\right)=1.
$$
This implies, together with \eqref{1d-dir-ev-explicit}, that
\begin{equation}\label{sj}
\sin^2(s_j)=\frac{1}{\cosh^2\left(\Lambda_j^{\frac{1}{4}}\right)}.
\end{equation}
Moreover, from \eqref{1d-buckling-ev-explicit} and \eqref{t-n-equation} we have
$$
\tan^2\left(\frac{t_j}{2}\right)=\frac{4}{\sigma_j},
$$
which implies
\begin{equation}\label{tj}
\sin^2\left(\frac{t_j}{2}\right)=\frac{4}{4+\sigma_j}.
\end{equation}
From \eqref{1d-buckling-ev-explicit}, \eqref{1d-dir-ev-explicit}, and from the fact that $0<s_j,\frac{t_j}{2}<\frac{\pi}{2}$, we deduce that $\sigma_j\leq 4\Lambda_j^{\frac{1}{2}}$. From \eqref{sj} and \eqref{tj} we deduce that $s_j<\frac{t_j}{2}$ if and only if
$$
\cosh^2\left(\Lambda_j^{\frac{1}{4}}\right)>1+\frac{\sigma_j}{4},
$$
which is verified being $4\Lambda_j^{\frac{1}{2}}\geq\sigma_j$. This proves the claim.
\end{proof}

We conclude this section with a final remark concerning another inequality by Payne, relating $\lambda_2$ and $\sigma_1$. 
\begin{equation}\label{payne2}
\lambda_2\leq\sigma_1.
\end{equation}
Inequality \eqref{payne2} is proved in \cite{payne} for $d=2$. However Payne's proof contained a gap, later filled in \cite{friedlander_rem} with a proof valid in any dimension. One-dimensional computations of this section show that in general one cannot expect that 
$$
\lambda_{j+1}\leq\sigma_j
$$
for $j\geq 2$. In fact, we have that, for $j$ odd,
$$
\lambda_{j+1}=\sigma_j,
$$
therefore the inequality becomes an equality. For $j$ even, we have
$$
\lambda_{j+1}>\sigma_j.
$$
Moreover, considering sums, we see that
$$
\sum_{j=1}^k\sigma_j<\sum_{j=1}^k\lambda_{j+1}
$$
therefore, for $k\geq 2$, Payne's inequality cannot be generalized to eigenvalue averages, at least for $d=1$.

We think that it is worth including here a simpler proof of \eqref{payne2}, valid in any dimension, and with the minimal assumptions on $\Omega$.

\begin{prop}
Inequality \eqref{payne2} holds for any domain $\Omega$ in $\mathbb R^d$ of finite measure.
\end{prop}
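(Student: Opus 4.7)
The plan is to exhibit a two-dimensional test subspace of $H^1_0(\Omega)$ on which the Rayleigh quotient of the Dirichlet Laplacian is uniformly bounded by $\sigma_1$, and then invoke the minimax formula \eqref{minmax-dirichlet} characterizing $\lambda_2$. Let $v_1\in H^2_0(\Omega)$ be a first buckling eigenfunction. I take $V=\mathrm{span}\{v_1,\partial_{i_*}v_1\}$, where $i_*\in\{1,\ldots,d\}$ is a coordinate index to be selected below. Note that partial derivatives of an $H^2_0$ function lie in $H^1_0$, by approximation via $C^\infty_c(\Omega)$.

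The first ingredient consists of the orthogonality relations $\int_\Omega v_1\,\partial_iv_1\,dx=0$ and $\int_\Omega\nabla v_1\cdot\nabla\partial_iv_1\,dx=0$, valid for every $i$. Both follow by writing the integrands as $\tfrac{1}{2}\partial_i(v_1^2)$ and $\tfrac{1}{2}\partial_i(|\nabla v_1|^2)$, respectively, approximating $v_1$ by $C^\infty_c(\Omega)$ functions, and using the divergence theorem (all boundary contributions vanish).

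The second ingredient consists of two Rayleigh-quotient bounds. On the one hand, integration by parts on $H^2_0$ gives $\int_\Omega v_1(-\Delta v_1)\,dx=\|\nabla v_1\|_2^2$, and the Cauchy--Schwarz inequality yields
$$\|\nabla v_1\|_2^4\le\|v_1\|_2^2\,\|\Delta v_1\|_2^2=\sigma_1\,\|v_1\|_2^2\,\|\nabla v_1\|_2^2,$$
so $\|\nabla v_1\|_2^2\le\sigma_1\|v_1\|_2^2$. On the other hand, the $H^2_0$-identity $\|\Delta v_1\|_2^2=\sum_{i,j}\|\partial_i\partial_jv_1\|_2^2=\sum_i\|\nabla\partial_iv_1\|_2^2$ together with $\sum_i\|\partial_iv_1\|_2^2=\|\nabla v_1\|_2^2$ yields
$$\sum_i\|\nabla\partial_iv_1\|_2^2=\sigma_1\sum_i\|\partial_iv_1\|_2^2.$$
A weighted-average argument applied over the set of indices for which $\partial_iv_1\not\equiv 0$ (nonempty since $v_1\ne 0$) then produces an index $i_*$ for which $\|\nabla\partial_{i_*}v_1\|_2^2\le\sigma_1\|\partial_{i_*}v_1\|_2^2$.

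With such $i_*$ fixed, $v_1$ and $\partial_{i_*}v_1$ are nonzero and $L^2$-orthogonal, hence linearly independent and spanning a $2$-dimensional subspace $V\subset H^1_0(\Omega)$. For any $w=\alpha v_1+\beta\partial_{i_*}v_1$, orthogonality in both $L^2$ and the $H^1$-seminorm kills the cross terms, giving
$$\frac{\int_\Omega|\nabla w|^2\,dx}{\int_\Omega w^2\,dx}=\frac{\alpha^2\|\nabla v_1\|_2^2+\beta^2\|\nabla\partial_{i_*}v_1\|_2^2}{\alpha^2\|v_1\|_2^2+\beta^2\|\partial_{i_*}v_1\|_2^2}\le\sigma_1,$$
and $\lambda_2\le\sigma_1$ follows from the minimax formula. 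The only delicate point is the potentially degenerate case where some $\partial_iv_1$ vanishes identically; it is handled by restricting the weighted-average argument to the indices where $\partial_iv_1\not\equiv 0$, with the $L^2$-orthogonality of $v_1$ and $\partial_{i_*}v_1$ then automatically providing linear independence.
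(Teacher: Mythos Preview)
Your proof is correct. It differs from the paper's argument mainly in the way the ingredients are assembled. Both proofs rest on the same three facts: the orthogonality relations $\int_\Omega v_1\,\partial_i v_1\,dx=0$ and $\int_\Omega \nabla v_1\cdot\nabla\partial_i v_1\,dx=0$, the Cauchy--Schwarz bound $\|\nabla v_1\|_2^2\le\sigma_1\|v_1\|_2^2$, and the identity $\sum_i\|\nabla\partial_i v_1\|_2^2=\|\Delta v_1\|_2^2=\sigma_1\|\nabla v_1\|_2^2$. The paper works with the characterization $\lambda_2=\min_{u\perp u_1}\|\nabla u\|_2^2/\|u\|_2^2$, splits into cases according to whether $v_1\perp u_1$, and in the non-orthogonal case tests with $\phi_i=\partial_i v_1+a_i v_1$ for each $i$ and \emph{sums} the resulting inequalities over $i$. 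You instead use the two-dimensional min-max directly, which has the advantage of eliminating the case distinction and any reference to $u_1$, at the cost of needing the pigeonhole selection of a single index $i_*$. The paper's summation trick is what lets it avoid that selection step. Both routes are short and valid; yours is arguably the more self-contained of the two.
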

\begin{proof}
For any buckling eigenfunction $w_j$ associated with the eigenvalue $\sigma_j$ and with $\|\nabla w_j\|_2^2=1$, we have $1\leq\|w_j\|_2^2\sigma_j$. This is just a consequence of the Cauchy-Schwarz inequality applied to $1=\|\nabla w_j\|_2^2=-\int_{\Omega}w_j\Delta w_j dx$. Let now $u_1$ be an eigenfunction associated with $\lambda_1$, the first eigenvalue of the Dirichlet Laplacian on $\Omega$. If $\int_{\Omega}u_1 w_1 dx=0$, then clearly, from the min-max principle \eqref{minmax-dirichlet} it follows that
$$
\lambda_2\leq\frac{\|\nabla w_1\|_2^2}{\|w_1\|_2^2}\leq\sigma_1.
$$
If $w_1$ is not orthogonal to $u_1$, let
$$
\phi_i=\partial_iw_1+a_i w_1,
$$
for $i=1,...,d$. The coefficients $a_i$ are chosen in such a way that $\int_{\Omega}\phi_i u_i dx=0$. Moreover, $\phi_i\in H^1_0(\Omega)$. Then, from the min-max principle and integration by parts, we obtain
$$
\lambda_2\left(\|\partial_i w_1\|_2^2+|a_i|^2\|w_1\|_2^2\right)\leq\left(\|\nabla\partial_i w_1\|_2^2+|a_i|^2\|\nabla w_1\|_2^2\right).
$$
Summing over $i=1,...,d$ and defining $|a|^2:=\sum_{i=1}^d|a_i|^2$, we get
$$
\lambda_2\left(\|\nabla w_1\|_2^2+|a|^2\|w_1\|_2^2\right)\leq\sigma_1+|a|^2\|\nabla w_1\|_2^2
$$
which is rewritten as
$$
\lambda_2\leq\frac{\sigma_1+|a|^2}{1+|a|^2\|w_1\|_2^2}=\sigma_1-|a|^2\frac{\sigma_1\|w_1\|_2^2-1}{1+|a|^2\|w_1\|_2^2}\leq\sigma_1.
$$
\end{proof}

\section*{Acknowledgements}

The second and fourth authors acknowledge support of the SNSF project ``Bounds for the Neumann and Steklov eigenvalues of the biharmonic operator'', grant number 200021\_178736. 
The first and the second authors are members of the Gruppo Nazionale per l'Analisi Ma\-te\-ma\-ti\-ca, la Probabilit\`a e le loro Applicazioni (GNAMPA) of the I\-sti\-tuto Naziona\-le di Alta Matematica (INdAM). The third author is member of the Gruppo Nazionale per le Strutture Algebriche, Geometriche e le loro Applicazioni (GNSAGA) of the I\-sti\-tuto Naziona\-le di Alta Matematica (INdAM).







\bibliographystyle{amsplain}
\bibliography{bibliography}

\def\cprime{$'$} \def\cprime{$'$} \def\cprime{$'$} \def\cprime{$'$}
  \def\cprime{$'$}
\providecommand{\bysame}{\leavevmode\hbox to3em{\hrulefill}\thinspace}
\providecommand{\MR}{\relax\ifhmode\unskip\space\fi MR }
\providecommand{\MRhref}[2]{%
  \href{http://www.ams.org/mathscinet-getitem?mr=#1}{#2}
}
\providecommand{\href}[2]{#2}
\begin{thebibliography}{10}

\bibitem{abram}
Milton Abramowitz and Irene~A. Stegun, \emph{Handbook of mathematical functions
  with formulas, graphs, and mathematical tables}, National Bureau of Standards
  Applied Mathematics Series, vol.~55, For sale by the Superintendent of
  Documents, U.S. Government Printing Office, Washington, D.C., 1964.
  \MR{0167642 (29 \#4914)}

\bibitem{agmon_bi}
Shmuel Agmon, \emph{On kernels, eigenvalues, and eigenfunctions of operators
  related to elliptic problems}, Comm. Pure Appl. Math. \textbf{18} (1965),
  627--663. \MR{198287}

\bibitem{ashbaugh_buckling}
Mark~S. Ashbaugh, \emph{On universal inequalities for the low eigenvalues of
  the buckling problem}, Partial differential equations and inverse problems,
  Contemp. Math., vol. 362, Amer. Math. Soc., Providence, RI, 2004, pp.~13--31.
  \MR{2091488}

\bibitem{ash_krein}
Mark~S. Ashbaugh, Fritz Gesztesy, Marius Mitrea, and Gerald Teschl,
  \emph{Spectral theory for perturbed {K}rein {L}aplacians in nonsmooth
  domains}, Adv. Math. \textbf{223} (2010), no.~4, 1372--1467. \MR{2581375}

\bibitem{ashbaugh_hermi}
Mark~S. Ashbaugh and Lotfi Hermi, \emph{A unified approach to universal
  inequalities for eigenvalues of elliptic operators}, Pacific J. Math.
  \textbf{217} (2004), no.~2, 201--219. \MR{2109931}

\bibitem{Berezin}
F.~A. Berezin, \emph{Covariant and contravariant symbols of operators}, Izv.
  Akad. Nauk SSSR Ser. Mat. \textbf{36} (1972), 1134--1167. \MR{0350504}

\bibitem{buosoprovenzano}
Davide Buoso and Luigi Provenzano, \emph{A few shape optimization results for a
  biharmonic {S}teklov problem}, J. Differential Equations \textbf{259} (2015),
  no.~5, 1778--1818. \MR{3349420}

\bibitem{bps}
Davide Buoso, Luigi Provenzano, and Joachim Stubbe, \emph{Semiclassical bounds
  for spectra of biharmonic operators}, arXiv:1904.11877v2 (2020).

\bibitem{CY_1}
Qing-Ming Cheng and Hongcang Yang, \emph{Universal bounds for eigenvalues of a
  buckling problem}, Comm. Math. Phys. \textbf{262} (2006), no.~3, 663--675.
  \MR{2202307}

\bibitem{CY_2}
\bysame, \emph{Universal bounds for eigenvalues of a buckling problem {II}},
  Trans. Amer. Math. Soc. \textbf{364} (2012), no.~11, 6139--6158. \MR{2946945}

\bibitem{EHIS}
Ahmad El~Soufi, Evans~M. Harrell, II, Sa\"{\i}d Ilias, and Joachim Stubbe,
  \emph{On sums of eigenvalues of elliptic operators on manifolds}, J. Spectr.
  Theory \textbf{7} (2017), no.~4, 985--1022. \MR{3737886}

\bibitem{frank}
Rupert~L. Frank and Leander Geisinger, \emph{Two-term spectral asymptotics for
  the {D}irichlet {L}aplacian on a bounded domain}, Mathematical results in
  quantum physics, World Sci. Publ., Hackensack, NJ, 2011, pp.~138--147.
  \MR{2885166}

\bibitem{frank_2}
\bysame, \emph{Semi-classical analysis of the {L}aplace operator with {R}obin
  boundary conditions}, Bull. Math. Sci. \textbf{2} (2012), no.~2, 281--319.
  \MR{2994205}

\bibitem{frank_3}
Rupert~L. Frank and Simon Larson, \emph{Two-term spectral asymptotics for the
  {D}irichlet {L}aplacian in a {L}ipschitz domain}, J. Reine Angew. Math.
  \textbf{766} (2020), 195--228. \MR{4145207}

\bibitem{friedlander_rem}
Leonid Friedlander, \emph{Remarks on the membrane and buckling eigenvalues for
  planar domains}, Mosc. Math. J. \textbf{4} (2004), no.~2, 369--375, 535.
  \MR{2108442}

\bibitem{friedlander_21}
\bysame, \emph{{O}n the {W}eyl asymptotic formula for {E}uclidean domains of
  finite volume}, 2021.

\bibitem{geisinger}
Leander Geisinger, \emph{A short proof of {W}eyl's law for fractional
  differential operators}, J. Math. Phys. \textbf{55} (2014), no.~1, 011504, 7.
  \MR{3390410}

\bibitem{GR}
I.~S. Gradshteyn and I.~M. Ryzhik, \emph{Table of integrals, series, and
  products}, seventh ed., Elsevier/Academic Press, Amsterdam, 2007, Translated
  from the Russian, Translation edited and with a preface by Alan Jeffrey and
  Daniel Zwillinger, With one CD-ROM (Windows, Macintosh and UNIX).
  \MR{2360010}

\bibitem{HaSt14}
Evans~M. Harrell and Joachim Stubbe, \emph{On sums of graph eigenvalues},
  Linear Algebra Appl. \textbf{455} (2014), 168--186. \MR{3217405}

\bibitem{hps}
Evans~M. Harrell~II, Luigi Provenzano, and Joachim Stubbe, \emph{Complementary
  {A}symptotically {S}harp {E}stimates for {E}igenvalue {M}eans of
  {L}aplacians}, Int. Math. Res. Not. IMRN (2021), no.~11, 8405--8450.
  \MR{4266145}

\bibitem{kozlov}
V.~A. Kozlov, \emph{Remainder estimates in formulas for the asymptotic behavior
  of the spectrum for linear operator pencils}, Funktsional. Anal. i Prilozhen.
  \textbf{17} (1983), no.~2, 80--81. \MR{705053}

\bibitem{Lap1997}
A.~Laptev, \emph{Dirichlet and {N}eumann eigenvalue problems on domains in
  {E}uclidean spaces}, J. Funct. Anal. \textbf{151} (1997), no.~2, 531--545.
  \MR{1491551}

\bibitem{laptev_weidl}
Ari Laptev and Timo Weidl, \emph{Recent results on {L}ieb-{T}hirring
  inequalities}, Journ\'{e}es ``\'{E}quations aux {D}\'{e}riv\'{e}es
  {P}artielles'' ({L}a {C}hapelle sur {E}rdre, 2000), Univ. Nantes, Nantes,
  2000, pp.~Exp. No. XX, 14. \MR{1775696}

\bibitem{LePr}
H.~A. Levine and M.~H. Protter, \emph{Unrestricted lower bounds for eigenvalues
  for classes of elliptic equations and systems of equations with applications
  to problems in elasticity}, Math. Methods Appl. Sci. \textbf{7} (1985),
  no.~2, 210--222. \MR{797333}

\bibitem{LiYau}
Peter Li and Shing Tung~and Yau, \emph{On the {S}chr\"odinger equation and the
  eigenvalue problem}, Comm. Math. Phys. \textbf{88} (1983), no.~3, 309--318.
  \MR{701919}

\bibitem{liu}
Genqian Liu, \emph{Some inequalities and asymptotic formulas for eigenvalues on
  {R}iemannian manifolds}, J. Math. Anal. Appl. \textbf{376} (2011), no.~1,
  349--364. \MR{2745412}

\bibitem{interlacing}
Tam\'{a}s P\'{a}lmai and Barnab\'{a}s Apagyi, \emph{Interlacing of positive
  real zeros of {B}essel functions}, J. Math. Anal. Appl. \textbf{375} (2011),
  no.~1, 320--322. \MR{2735716}

\bibitem{payne}
L.~E. Payne, \emph{Inequalities for eigenvalues of membranes and plates}, J.
  Rational Mech. Anal. \textbf{4} (1955), 517--529. \MR{70834}

\bibitem{ppw}
L.~E. Payne, G.~P\'{o}lya, and H.~F. Weinberger, \emph{On the ratio of
  consecutive eigenvalues}, J. Math. and Phys. \textbf{35} (1956), 289--298.
  \MR{84696}

\bibitem{pleijel_bi}
\AA~ke Pleijel, \emph{On the eigenvalues and eigenfunctions of elastic plates},
  Comm. Pure Appl. Math. \textbf{3} (1950), 1--10. \MR{37459}

\bibitem{roz_72}
G.~V. Rozenbljum, \emph{The eigenvalues of the first boundary value problem in
  unbounded domains}, Mat. Sb. (N.S.) \textbf{89 (131)} (1972), 234--247, 350.
  \MR{0348295}

\bibitem{rozenbljum}
\bysame, \emph{Distribution of the discrete spectrum of singular differential
  operators}, Izv. Vys\v{s}. U\v{c}ebn. Zaved. Matematika (1976), no.~1(164),
  75--86. \MR{0430557}

\bibitem{safvas}
Yu. Safarov and D.~Vassiliev, \emph{The asymptotic distribution of eigenvalues
  of partial differential operators}, Translations of Mathematical Monographs,
  vol. 155, American Mathematical Society, Providence, RI, 1997, Translated
  from the Russian manuscript by the authors. \MR{1414899}

\end{thebibliography}

\end{document}